\setlist{nolistsep}
\DeclareMathOperator{\pw}{pw}
\definecolor{purple}{RGB}{138,43,226}
\definecolor{azure(colorwheel)}{rgb}{0.0, 0.5, 1.0}
\definecolor{Green}{RGB}{34, 139, 34}
\definecolor{USABlue}{RGB}{0,32,91}
\definecolor{USARed}{RGB}{191,13,62}
\definecolor{USALightBlue}{RGB}{0,169,224}
\newlist{statement}{enumerate}{1}
\setlist[statement]{label=\textup{(\alph*)},ref=\doublelabel{\textup{(\alph*)}},before=\setcrefdoublealias,nosep}
\crefname{statementi}{Statement}{Statements} 
\newcommand*\doublelabel[1]{\protect\@twolabels{#1}{\@currentlabel#1}}
\let\@twolabels\@firstoftwo
\def\setcrefdoublealias{%
	\begingroup\edef\x{\endgroup%
		\noexpand\crefalias{statementi}{%
			\noexpand\protect\noexpand\@twolabels%
			{statementi}{\expandafter\@extractcounterfromcreflabel\cref@currentlabel\end@extractcounterfromcreflabel}%
		}%
	}\x%
}
\def\@extractcounterfromcreflabel[#1]#2\end@extractcounterfromcreflabel{#1}
\newcommand*\versionWithTheorem[1]{\@ifstar{\versionWithTheorem@aux{#1*}}{\versionWithTheorem@aux{#1}}}
\newcommand*\versionWithTheorem@aux[2]{\begingroup\let\@twolabels\@secondoftwo#1{#2}\endgroup}
\DeclareRobustCommand*\crefWithTheorem{\versionWithTheorem\cref}
\DeclareFontFamily{U}{FdSymbolA}{}
\DeclareFontShape{U}{FdSymbolA}{m}{n}{
	<-> s * [1.5] FdSymbolA-Book
}{}
\DeclareFontShape{U}{FdSymbolA}{m}{b}{
	<-> s * [1] FdSymbolA-Medium
}{}
\DeclareSymbolFont{fdsymbols}{U}{FdSymbolA}{m}{n}
\DeclareMathSymbol{\upY}{\mathbin}{fdsymbols}{45}
\DeclareMathSymbol{\downY}{\mathbin}{fdsymbols}{47}
\DeclareMathSymbol{\hourglass}{\mathbin}{fdsymbols}{43}
\mathchardef\mhyphen="2D
\newtheorem{theorem}{Theorem}[section]
\newtheorem{lemma}[theorem]{Lemma}
\newtheorem{corollary}[theorem]{Corollary}
\newtheorem{proposition}[theorem]{Proposition}
\crefname{proposition}{Proposition}{Propositions}
\numberwithin{figure}{section}
\newtheorem{observation}[theorem]{Observation}
\newtheorem{question}[theorem]{Question}
\crefname{question}{Question}{Questions}
\crefname{observation}{Observation}{Observations}
\newtheorem{conjecture}[theorem]{Conjecture}
\crefname{conjecture}{Conjecture}{Conjectures}
\theoremstyle{remark}
\newtheorem{remark}[theorem]{Remark}
\newtheorem{claim}{Claim}
\crefname{claim}{Claim}{Claims}
\newcommand{\thistheoremname}{}
\newtheorem{genericthm}[theorem]{\thistheoremname}
\newtheorem*{genericthm*}{\thistheoremname}
\newenvironment{namedthm*}[1]
{\renewcommand{\thistheoremname}{#1}%
	\begin{genericthm*}}
	{\end{genericthm*}}
\newcommand{\Ll}{\mathcal{L}}
\newcommand{\B}{\mathcal{B}}
\def\N{\mathcal{N}}
\def\A{\mathcal{A}}
\def\TS{\mathcal{T}}
\def\mZ{\mathbb{Z}}
\def\G{\mathcal{G}}
\definecolor{Green}{RGB}{34, 139, 34}
\def\Binf{\B_\infty}
\def\BinfB{\Binf^{\fam0 B}}
\def\BinfU{\Binf^{\fam0 U}}
\def\Bs{\B_{\fam0 S}}
\def\Bn{\B_{\fam0 N}}
\def\TSinf{\TS_\infty}
\def\TSinfB{\TSinf^{\fam0 B}}
\def\TSinfone{\TSinf^1}
\def\turan#1{Tur\'{a}n}
\def\erdos#1{Erd\H{o}s}
\def\sos#1{S\'{o}s}
\def\wozniak#1{Wo\'{z}niak}
\def\patkos#1{Patk\'{o}s}
\def\ex{{\fam0 ex}}
\def\exc{\ex_{\fam0 c}}
\title{Forbidding the subdivided claw as a subgraph or a minor}
\author{Sarah Allred\thanks{Department of Mathematics and Statistics, University of South Alabama, Mobile, AL, USA (sarahallred@southalabama.edu).}
\and M.~N.~Ellingham\thanks{Department of Mathematics,  Vanderbilt University, Nashville, TN, USA (mark.ellingham@vanderbilt.edu).  Supported by Simons Foundation award MPS-TSM-00002760.}
}
\date{} 
\begin{document}
\maketitle
\begin{abstract}
Let $Y$ be the \emph{subdivided claw}, the $7$-vertex tree obtained from a claw $K_{1,3}$ by subdividing each edge exactly once.  We characterize the graphs (finite and infinite) that do not have $Y$ as a subgraph, or, equivalently, do not have $Y$ as a minor.  This work was motivated by a problem involving VCD minors.  A graph $H$ is a \emph{vertex contraction-deletion minor}, or \emph{VCD minor}, of a graph $G$ if $H$ can be obtained from $G$ by a sequence of vertex deletions or contractions of all edges incident with a single vertex.  Our result is a key step in describing $K_{1,3}$-VCD-minor-free line graphs.  We also characterize graphs that forbid each subtree of $Y$.
We discuss the relevance of our results for \turan. numbers of trees, and pathwidth and growth constants for graphs without a particular tree as a minor.
\end{abstract}

\section{Introduction}\label{Sec:Intro}
There are several results on the structure of graphs that exclude a tree or forest as a minor.  However, there is little data on what graphs excluding specific small trees look like.  Information of this kind can help to generate new general questions.  In this paper we consider the \emph{subdivided claw}, the $7$-vertex graph obtained by subdividing each edge of $K_{1,3}$, which we denote by $Y$.
We consider the $T$-minor-free graphs where $T$ is either $Y$ or one of its subtrees.  These are equivalent to the graphs without $T$ as a subgraph, and we characterize those graphs in both finite and infinite situations.  Our characterizations are relevant to some general questions that we discuss at the end of the paper, involving \turan. numbers for trees, and pathwidth and growth constants for graphs with a tree as a forbidden minor.

For this paper, all graphs will be simple.  Any notation not defined here follows \cite{west}.
The size of a largest matching in a graph is denoted $\alpha'(G)$.
We will consider several well-known graph relations whose definitions follow.
A graph $H$ is a \emph{subgraph} of a graph $G$ if $H$ can be obtained from $G$ by a sequence of vertex and edge deletions. 
A graph $H$ is a \emph{minor} of a graph $G$ if $H$ can be obtained from $G$ by a sequence of vertex deletions, edges deletions, and edge contractions.
A graph $H$ is a \emph{topological minor} of a graph $G$ if $H$ is isomorphic to a subdivision of a subgraph of $G$.

It is often desirable to relate a particular class of graphs to a set of forbidden substructures.  There are a variety of such results using different graph relations. 
Bipartite graphs are characterized by forbidding odd cycles as subgraphs.
Kuratowski \cite{kuratowski} characterized planar graphs using the forbidden topological minors $K_5$ and $K_{3,3}$.  Wagner \cite{wagner} proved that the same two graphs were forbidden as minors.
In \cite{forlinegraphs},  Beineke proved that line graphs can be characterized by nine forbidden induced subgraphs.
Chordal graphs are characterized by having no induced cycles of length 4 or more.
In \cite{graphminorsXX}, Robertson and Seymour proved that every minor-closed class has a finite list of forbidden minors.

Since $Y$ is a tree that has only one vertex of degree at least 3, having $Y$ as a subgraph is equivalent to having $Y$ as a topological minor.
Further, since each vertex of $Y$ has degree at most 3, having $Y$ as a topological minor is equivalent to having $Y$ as a minor.  
Similarly, for any subtree $T$ of $Y$, having $T$ as a subgraph is equivalent to having $T$ as a topological minor or minor.

Our initial motivation for studying graphs without $Y$ as a subgraph or minor arose from a question involving a graph relation called a \emph{vertex-contraction-deletion minor}.
A \emph{vertex contraction} of a vertex $v$ in a graph $G$ is the operation of identifying $v$ and all neighbors of $v$ into a single vertex.   
This is equivalent to contracting all edges incident with $v$.
A graph $H$ is a \emph{vertex-contraction-deletion minor} (VCD minor) of a graph $G$ if $H$ can be obtained from $G$ by a sequence of vertex deletions and vertex contractions.
The VCD minor relation is a slightly weaker relation than the \emph{t-minor} relation \cite{BS10}.
We are interested in characterizing $K_{1,3}$-VCD-minor free graphs, and an important subclass of this family is the $K_{1,3}$-VCD-minor free line graphs.
Forbidding a $K_{1,3}$ as a VCD minor in a line graph $G=L(H)$ where $\alpha'(H)\ge 4$ is equivalent to forbidding $Y$ as a subgraph of $H$.  The results in this paper will be used in a forthcoming paper on this topic.
 
We note that forbidding $Y$ as a subgraph has already been used to characterize the class of trees known as \emph{caterpillars}, in which there is a path (the \emph{spine}) such that every edge is incident with a vertex of that path.  This is a special case of our main result.

\begin{theorem}[West {\cite[Theorem 2.2.19]{west}}; see also {\cite[p.~172]{kl-fmpw2}}]\label{thm:treenoycat}
A tree does not have $Y$ as a subgraph if and only if it is a caterpillar.
\end{theorem}

We will now define some structures necessary to describe $Y$-subgraph free graphs.
A \emph{bead} is a graph $G$ with a set $V_1$ of one or two special vertices designated as \emph{primary} vertices; the other vertices are \emph{secondary} vertices.
When necessary we describe a bead as an ordered pair $(G,V_1)$, but usually we just name the graph $G$.
The graph of a bead is one of the following: $K_4$, $K_{2,1,1}$, $K_{1,1,t_1}$ with $t_1 \ge 0$, or $K_{2,t_2}$ with $t_2 \ge 2$.
The primary vertices in $K_{1,1,t_1}$ are the parts of size one.
The primary vertices in $K_{2,t_2}$ and $K_{2,1,1}$ are the vertices in the part of size two.
One vertex of $K_4$ is designated as a primary vertex.
Note that for us $K_{2,1,1}$ is not the same as $K_{1,1,2}$ because each has a different set of primary vertices.  The beads are illustrated in \cref{fig:beads}, where the white (open) vertices are the primary vertices.

Beads can be ``strung" together at primary vertices. 
This means we can identify a primary vertex in one bead with a primary vertex in a different bead.
Each primary vertex can only be identified with one other primary vertex.

\begin{figure}[h]
 \begin{subfigure}[b]{.23\textwidth}
\begin{center}
     \begin{tikzpicture}
        [scale=1,auto=left,every node/.style={circle, fill, inner sep=0 pt, minimum size=2mm, outer sep=0pt},line width=.4mm]
            \node (1) at (0:0cm){};
            \node (2) at (120:1.25cm){};
            \node (3) at (240:1.25cm){};
            \node[fill=white,draw] (4) at (0:1.25cm){};
            \begin{scope}[on background layer]
                \foreach \i in {120,240, 0}{
                \draw[line width=.4mm] (0:0cm) to (\i:1.25cm); 
                \draw[line width=.4mm] (\i:1.25cm) to (\i+120:1.25cm);
                }
            \end{scope}
	\end{tikzpicture}
\end{center}
        \caption{$K_{4}$}
    \end{subfigure}
    \begin{subfigure}[b]{.23\textwidth}
\begin{center}
     \begin{tikzpicture}
        [scale=1,auto=left,every node/.style={circle, fill, inner sep=0 pt, minimum size=2mm, outer sep=0pt},line width=.4mm]
            \node[fill=white,draw](1) at (0,1) {};
            \node (2) at (1,0){};
            \node (3) at (1,2){};
            \node[fill=white,draw] (4) at (2,1){};
            \begin{scope}[on background layer]
                \draw[line width=.4mm] (1.center) to (3.center) to (2.center) to (4.center) to (3.center); 
                \draw[line width=.4mm] (1.center) to (2.center);
            \end{scope}
	\end{tikzpicture}
\end{center}
        \caption{$K_{2,1,1}$}
    \end{subfigure}
    \begin{subfigure}[b]{.23\textwidth}
        \begin{center}
     \begin{tikzpicture}
        [scale=1,auto=left,every node/.style={circle, fill, inner sep=0 pt, minimum size=2mm, outer sep=0pt},line width=.4mm]
            \node[fill=white,draw] (1) at (0,1.1){};
            \node[fill=white,draw] (2) at (2,1.1) {};
            \node (3) at (1,0){};
            \node (4) at (1,.75){};
            \node (5) at (1,1.5){};
            \node (6) at (1,2.25){};
            \begin{scope}[on background layer]
            \draw[line width=.4mm] (1.center) to (2.center);
                \foreach \i in {3,4,5,6}{
                \draw[line width=.4mm] (1.center) to (\i.center);
                \draw[line width=.4mm] (2.center) to (\i.center);
                }
            \end{scope}
	\end{tikzpicture}
\end{center}
        \caption{$K_{1,1,t_1}$ with $t_1=4$}
    \end{subfigure}
     \begin{subfigure}[b]{.23\textwidth}
\begin{center}
     \begin{tikzpicture}
        [scale=1,auto=left,every node/.style={circle, fill, inner sep=0 pt, minimum size=2mm, outer sep=0pt},line width=.4mm]
            \node[fill=white,draw] (1) at (0,1.1){};
            \node[fill=white,draw] (2) at (2,1.1) {};
            \node (3) at (1,0){};
            \node (4) at (1,.75){};
            \node (5) at (1,1.5){};
            \node (6) at (1,2.25){};
            \begin{scope}[on background layer]
                \foreach \i in {3,4,5,6}{
                \draw[line width=.4mm] (1.center) to (\i.center);
                \draw[line width=.4mm] (2.center) to (\i.center);
                }
            \end{scope}
 	\end{tikzpicture}
\end{center}
        \caption{$K_{2,t_2}$ with $t_2=4$}
    \end{subfigure}
    \caption{Beads}
       \label{fig:beads}
\end{figure}

Consider a connected graph $G$ created by stringing beads together.  We form a bipartite auxiliary graph $A$ by taking one black vertex for each bead and one white vertex for each primary vertex, where a bead vertex is joined to its primary vertices.
Then $G$ is a \emph{strand} if $A$ is a path, and a \emph{necklace} if $A$ is a cycle.
For a strand, we could have $K_4$ with one primary vertex as the first and/or last bead; necklaces do not have $K_4$ as a bead.
A necklace with exactly two beads cannot have both beads of the form $K_{1,1,t_1}$ because stringing two beads of this kind would create parallel edges, and we only allow simple graphs.
A vertex of degree $1$ is a \emph{leaf} or \emph{pendant vertex}, and an edge incident with a leaf is a \emph{pendant edge}.
A \emph{spiked necklace (strand)} is a necklace (strand) that is allowed to have additional pendant edges called \emph{spikes} incident with primary vertices that are in exactly two beads.
\cref{fig:structuresB} illustrates a spiked strand and a spiked necklace; again, the white (open) vertices are the primary vertices.

We now explain some of the restrictions imposed by our definitions.  
We could allow $K_{2,1}$ as a bead, but this can be considered as two $K_{1,1,0}$ beads strung together.
We could allow spikes incident with primary vertices that are in only one bead, which must be a bead at the end of a strand, but then one of those spikes can be considered as a new $K_{1,1,0}$ bead.
Thus, these restrictions eliminate ambiguity about the number of beads in a spiked strand.
If a necklace has exactly two beads, they cannot both have the form $K_{1,1,t}$ because that would create multiple edges, and our graphs are simple.
There are still some ambiguities about whether certain graphs should be regarded as spiked strands or spiked necklaces, which we do not try to eliminate.  These occur for spiked necklaces with at most four beads.  

\begin{figure}[h!]
\begin{subfigure}[c]{.55\textwidth}
\centering
     \begin{tikzpicture}
        [scale=1,auto=left,every node/.style={circle, fill, inner sep=0 pt, minimum size=2mm, outer sep=0pt},line width=.4mm]
            \node (a) at (0,1){};
            \node (b) at (-.5,1.87){};
            \node (c) at (-.5,.13){};
            \node[fill=white,draw] (1) at (1,1){};
            \node (2) at (2,0){};
            \node (3) at (2,2){};
            \node[fill=white,draw] (4) at (3,1){};
            \node[fill=white,draw] (1a) at (3,1){};
            \node[fill=white,draw] (2a) at (5,1) {};
            \node (3a) at (4,0){};
            \node (4a) at (4,.65){};
            \node (5a) at (4,1.35){};
            \node (6a) at (4,2){};
            \node (1p1) at (1,0){};
            \node (1p2) at (.75, 2){};
            \node (2p1) at (3,0){};
             \node (2b) at (6,0){};
            \node (3b) at (6,2){};
            \node[fill=white] (space2) at (4,3.25){};
            \node[fill=white] (space) at (4,-1){};
            \node[fill=white,draw] (4b) at (7,1){};
            \node[fill=white, draw] (5d) at (8,1) {};
            \node (5b) at (7,0) {};
            \node (5c) at (7,2) {};
            \begin{scope}[on background layer]
            \draw[line width=.4mm] (4) to (2p1);
            \draw[line width=.4mm] (1p2) to (1) to (1p1);
            \draw[line width=.4mm] (3b) to (2a) to (2b) to (3b) to (4b) to (2b);
            \draw[line width=.4mm] (b) to (c) to (1) to (b);
                \foreach \i in {c,b,1}{
                \draw[line width=.4mm] (a) to (\i); 
                }
                 \draw[line width=.4mm] (1.center) to (3.center) to (2.center) to (4.center) to (3.center); 
                \draw[line width=.4mm] (1.center) to (2.center);
                \draw[line width=.4mm] (1a.center) to (2a.center);
                \foreach \i in {3a,4a,5a,6a}{
                \draw[line width=.4mm] (1a.center) to (\i.center);
                \draw[line width=.4mm] (2a.center) to (\i.center);}
                \draw[line width=.4mm] (5c) to (4b) to (5b); 
                \draw[line width=.4mm] (5d) to (4b);
                \end{scope}
	\end{tikzpicture}

\caption{A spiked strand}
\label{fig:spikestrand}
\end{subfigure}
\begin{subfigure}[c]{.4\textwidth}
\centering
     \begin{tikzpicture}
        [scale=1,auto=left,every node/.style={circle, fill, inner sep=0 pt, minimum size=2mm, outer sep=0pt},line width=.4mm]
\node[fill=white,draw] (1) at (1,1){};
            \node[fill=white,draw] (2) at (3,1) {};
            \node (3) at (2,0){};
            \node (4) at (2,.5){};
            \node[fill=white,draw] (1a) at (1,3) {};
            \node (2a) at (.25,2) {};
            \node (3a) at (1.75,2){};
            \node[fill=white, draw] (1b) at (3,3){};
            \node (2b) at (2,2.75){};
            \node (3b) at (2,3.5){};
            \node (4b) at (2,4.25){};
            \node (1c) at (3.75,2){};
            \node (2c) at (4.5,2){};
            \node (3c) at (5.25,2){};
            \node (p1) at (4.25,3.2) {};
            \node (p2) at (3.5,3.75){};
            \node (p3) at (.75,4){};
            \node (p4) at (.25, .25){};
            \begin{scope}[on background layer]
            \draw[line width=.4mm] (1.center) to (2.center);
                \foreach \i in {3,4}{
                \draw[line width=.4mm] (1.center) to (\i.center);
                \draw[line width=.4mm] (2.center) to (\i.center);
                }
                \draw[line width=.4mm] (3a) to (2a) to (1a) to (3a) to (1) to (2a);
                \foreach \j in {2b,3b,4b}{
                \draw[line width=.4mm] (1a) to (\j.center);
                \draw[line width=.4mm] (1b) to (\j.center);
                }
                \foreach \k in {1c,2c,3c}{
                \draw[line width=.4mm] (1b.center) to (\k.center);
                \draw[line width=.4mm] (2.center) to (\k.center);
                }
                \draw[line width=.4mm] (1b) to (2);
                \draw[line width=.4mm] (p1) to (1b) to (p2);
                \draw[line width=.4mm] (p3) to (1a);
                \draw[line width=.4mm] (p4) to (1);
            \end{scope}
	\end{tikzpicture}
 
\caption{A spiked necklace}
\label{fig:spikednecklace}
\end{subfigure}
\caption{Structures in $\B$}
\label{fig:structuresB}
\end{figure}

A graph is a member of the family \emph{$\B$} if and only if it is a spiked strand or spiked necklace.  All graphs in $\B$ are connected.

We say that two nonadjacent vertices $u$ and $v$ in a graph $G$ are \emph{clones} in $G$ if $N(u)=N(v)$.
This is an equivalence relation and the equivalence classes are called\emph{clone classes}.
A \emph{leaf class} is a clone class whose elements are leaves in $G$.
By \emph{cloning} a vertex $v$ in $G$, we mean adding new vertices $v_1,\dots , v_k$ to form $G'$ such that $v, v_1, \dots, v_k$ are pairwise nonadjacent and $N_{G'}(v_i)=N_G(v)$ for $1\le i \le k$.

We can now state the main theorem of this paper.

\begin{restatable}{theorem}{structthm}\label{thm:noY}
A connected graph $G$ has no subgraph isomorphic to $Y$ (is $Y$-minor-free) if and only if either
\begin{statement}
\item\label{smallG} $G$ is obtained from a graph with at most six vertices by optionally cloning leaves, or 
\item\label{infam} $G$ belongs to the family $\B$, i.e., $G$ is a spiked strand or spiked necklace. \end{statement}
\end{restatable}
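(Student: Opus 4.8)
Throughout I would use the restatement that $G$ has no $Y$-subgraph exactly when no vertex of $G$ is the common end of three paths of length at least $2$ that are pairwise disjoint except at that vertex; call such a configuration a \emph{tripod}. This is immediate from the equivalences ``$Y$-subgraph $=$ $Y$-topological minor $=$ $Y$-minor'' recorded above, a $Y$-subgraph being exactly a tripod. For the ``if'' direction of \cref{smallG}, suppose $G$ is obtained from $G_0$ with $|V(G_0)|\le 6$ by cloning leaves, and for contradiction that $G$ contained a copy of $Y$ with center $v$, middle vertices $m_1,m_2,m_3$ and leaves $\ell_1,\ell_2,\ell_3$. Each of $v,m_1,m_2,m_3$ has degree at least $2$ in $Y$, so none is a cloned leaf and all four lie in $G_0$; and each $\ell_i$ that is a cloned leaf can be replaced by a representative of its clone class lying in $G_0$, which has the same unique neighbour $m_i$. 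Distinct $\ell_i$ have distinct neighbours $m_i$, so these representatives are pairwise distinct and distinct from $v,m_1,m_2,m_3$, giving a copy of $Y$ on seven vertices inside $G_0$ --- impossible. For the ``if'' direction of \cref{infam}, one verifies bead by bead that from any vertex $v$ of a spiked strand or necklace at most two pairwise-disjoint paths of length at least $2$ can emanate: a bead incident to $v$ contributes at most one such path because the rest of the bead reconverges through its other primary vertex (or, in $K_{2,1,1}$, through a secondary vertex), $v$ lies in at most two beads, and spikes contribute only paths of length $1$; hence $v$ is the center of no tripod.

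For the ``only if'' direction, let $G$ be connected and $Y$-free, and let $G_0$ be the \emph{clone-reduced} graph obtained by deleting all but one vertex of each leaf class. Then $G_0$ is $Y$-free and $G$ is recovered from $G_0$ by cloning leaves; cloning leaves of a graph with at most six vertices preserves \cref{smallG}, and cloning leaves of a member of $\B$ with at least seven vertices keeps it in $\B$ (a cloned spike is another spike, and a cloned free end of a strand is a spike at the neighbouring primary vertex), so it suffices to treat $G_0$, and we may assume $|V(G_0)|\ge 7$. If $G_0$ is a tree it is a caterpillar by \cref{lem:treenoycat}; choosing its spine to be a longest path (necessarily of length at least $2$) puts every leg on an internal spine vertex, which is exactly the description of a spiked strand of $K_{1,1,0}$ beads, so $G_0\in\B$.

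Now suppose $G_0$ has a cycle. The heart of the proof is the classification of the $2$-connected $Y$-free graphs: each is a bead, a necklace, or has at most six vertices. I would take a longest cycle $C$; if $C=G_0$ then $G_0$ is a cycle, hence a necklace of edge-beads. Otherwise $2$-connectivity yields an ear $P$ with endpoints $x,y$ on $C$, and since $C$ is longest one shows that the shorter of the two arcs of $C$ between $x$ and $y$ has length $2$ and that $P$ has length $2$, so $x$ and $y$ have two common neighbours, one on and one off $C$; iterating this over all ears and chords --- while tracking which vertices acquire a second disjoint path of length at least $2$ --- pins the graph down to a single bead or necklace unless it is small. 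To globalize, the tripod obstruction shows that each cut vertex $v$ has at most two components of $G_0-v$ with more than one vertex, and a more careful use of it forces the blocks of $G_0$ to be strung along a strand or to form a single necklace, with every pendant edge attached at a primary vertex lying in two beads --- that is, as a spike --- possibly after re-decomposing a bead as a short necklace of smaller beads to make a cut vertex or spike-location primary. Since $|V(G_0)|\ge 7$ and $G_0$ is clone-reduced, the small exceptional $2$-connected graphs (such as the wheels on five and six vertices, which admit no $Y$-free extension beyond six vertices) cannot arise, so every block is a bead or necklace and $G_0$ is a spiked strand or spiked necklace.

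The step I expect to be hardest is the $2$-connected classification together with the non-uniqueness of bead decompositions: a single $Y$-free graph may be a strand in one presentation and a necklace in another (for instance $K_{2,3}$ with a pendant edge at a degree-$2$ vertex is really a necklace of $K_{2,2}$ and two edge-beads carrying a spike), so the ear-and-chord argument must be organised to produce an explicit decomposition, and the accounting for short cycles, for pendant edges at secondary vertices, and for the finitely many small exceptional graphs is where the casework becomes delicate.
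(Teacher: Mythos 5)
Your ``if'' direction is sound and is essentially the paper's own argument: the clone-reduction for statement (a), and the bead-by-bead verification for statement (b) that no vertex of a spiked strand or necklace is the common end of three internally disjoint paths of length at least $2$. (One small repair: for $K_{2,1,1}$ the right reason is that its two primary vertices are \emph{non}-adjacent, so every length-$2$ path leaving a primary vertex into that bead must pass through the other primary vertex or through both secondary vertices; your phrase about ``reconverging through a secondary vertex'' does not quite say this.) Your reduction of the hard direction to the tree case via \cref{lem:treenoycat} plus a block-style analysis is also a legitimate, genuinely different organization from the paper, which instead fixes a longest path, splits on whether there is a long (edge-dominating) cycle, and builds the bead partition directly on that path or cycle.

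The gap is that the central step of your plan --- ``each $2$-connected $Y$-free graph is a bead, a necklace, or has at most six vertices,'' followed by the globalization over cut vertices --- is asserted, not proved, and it is exactly as hard as the theorem itself. ``Iterating this over all ears and chords \dots pins the graph down to a single bead or necklace unless it is small'' compresses what the paper spends roughly a dozen lemmas establishing: that all chords of a longest edge-dominating cycle are $2$-chords (\cref{lem:cycleboundchords}), that three consecutive $2$-chords cannot occur (\cref{lem:cycle3conschords}), that a vertex enclosed by a $2$-chord or a vee has no neighbours off the cycle (\cref{lem:cyclependchord}), that vees do not cross one another or chords, and only then that the cycle's edges partition into the subpaths that become beads (\cref{prop:cyclestructure}). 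The globalization is likewise waved at: you must rule out, for instance, a pendant edge at a secondary vertex of an interior $K_{2,t}$ bead --- this does force a $Y$, but one centred at a neighbouring primary vertex rather than at the attachment point, so ``tracking which vertices acquire a second disjoint path'' has to be done vertex by vertex across bead boundaries; and the two ends of a strand genuinely behave differently (the paper needs the extra minimality of $\deg(v_0)+\deg(v_\ell)$ and the exceptional $3$-chords $v_0v_3$ and $v_{\ell-3}v_\ell$ in \cref{prop:pathstructure}, for which your cycle-based sketch has no counterpart). You correctly identify where the difficulty lies, but the proof of that part is missing rather than merely abbreviated.
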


In \cref{Sec:PathCase,Sec:CycleCase,Sec:MainResult,sec:Subtree}, we consider all graphs to be finite.
We divide the forwards direction of the proof of \cref{thm:noY} into two cases, depending on whether $G$ has a long cycle or not.  \cref{Sec:PathCycle} proves some basic structural results that apply in both cases,
\cref{Sec:PathCase} addresses the case where there is no long cycle (giving spiked strands), and \cref{Sec:CycleCase} addresses the case where there is a long cycle (giving spiked necklaces).
We complete the proof in \cref{Sec:MainResult}.
In \cref{sec:Subtree}, we characterize graphs without $T$ as a subgraph (or minor) for subtrees $T$ of $Y$.
Infinite analogues of the results in \cref{Sec:MainResult,sec:Subtree} appear in \cref{sec:BoundedPathLength}.
In \cref{sec:pathwidth}, we make some observations on the number of edges, pathwidth, and growth constant of graphs without a $Y$, which raise some larger issues.

\section{Basic structural results}\label{Sec:PathCycle}
In this section, we prove some basic facts about the structure of a connected graph that does not have $Y$ as a subgraph.  Throughout \cref{Sec:PathCycle,Sec:PathCase,Sec:CycleCase} $G$ denotes such a graph.  We derive the structure by starting with a longest path, which if possible has ends that are leaves of $G$.

The \emph{pointiness} of a path $P$ in $G$ is the number of endpoints of $P$ that are leaves of $G$ (namely $0$, $1$, or $2$).  A path with no edges has only one endpoint, hence pointiness $0$ or $1$.  We talk about one path being \emph{pointier} than another if the pointiness of the first path is higher, and a \emph{pointiest} path in a collection is one of maximum pointiness.

For us, $P_k$ means a path with $k$ vertices.  A \emph{star} is a graph $K_{1,t}$ for some $t \ge 0$, which includes $K_1 \cong K_{1,0}$ and $K_2 \cong K_{1,1}$.  A \emph{double star} is a graph obtained from $P_4$ by optionally cloning leaves.  A \emph{triangle} is $K_3 = C_3$.

For this section and the next, we fix a pointiest longest path $P = v_0 v_1 \dots v_\ell$ of $G$.  (The first few results below are valid for any longest path $P$; we do not use pointiness until \cref{lem:endchord}.)
By a \emph{chord} we mean a chord of $P$, i.e., $e=v_j v_k \in E(G)$ with $k \ge j+2$; we call $e$ an \emph{$i$-chord} if $k-j = i$.
The vertices $v_{j+1},\dots, v_{k-1}$ are the \emph{enclosed vertices} of $e$, and we say that $e$ \emph{covers} the edges of the path $v_j v_{j+1} \dots v_k$.
Two chords $v_i v_j$ and $v_p v_q$, with $i < j$ and $p < q$, are said to \emph{cross} if either $i < p < j < q$ or $p < i < q < j$.
By a \emph{vee} we mean a path $v_i w v_{i+2}$ where $w \notin V(P)$.
The vertex $v_{i+1}$ is the \emph{enclosed vertex} of the vee, and we say that the vee \emph{covers} the edges of the path $v_i v_{i+1} v_{i+2}$.
A vee $v_i w v_{i+2}$ is said to \emph{cross} another vee or a chord if $v_{i+1}$ is an endpoint of the other vee or chord.

A subgraph $H$ of $G$ is \emph{edge-dominating} if every edge of $G$ is incident with a vertex of $H$ (i.e., $V(H)$ is a vertex cover).  Let $L_i=N_G(v_i)-V(P)$ for $0 \le i \le \ell$, and $\Ll = \bigcup_{i=0}^\ell L_i$.  The following is a key part of our argument.

\begin{lemma}\label{lem:domset}
The path $P$ is edge-dominating.  Thus, $V(G)-V(P) = \Ll$, and $\Ll$ is an independent set in $G$.
\end{lemma}

\begin{proof}
Suppose there is an edge $e = wx$ with $w, x \notin V(P)$.  Then there is a path $Q$ with first edge $e$, at least two edges, no internal vertices in $P$, and ending at $v_i$ on $P$.  We cannot have $i \le 1$, otherwise $Q \cup v_i v_{i+1} \dots v_\ell$ would be a longer path than $P$.  Similarly, we cannot have $i \ge \ell-1$.  Therefore, $2 \le i \le \ell-2$, and $Q \cup P$ contains a $Y$ subgraph, which is a contradiction, so no such $e$ exists.  The claims about $\Ll$ follow easily.
\end{proof}

When $G$ is a tree, \cref{lem:domset} implies \cref{thm:treenoycat}.

Our analysis focuses on the relationships between the sets $L_i$, including whether vees are created, and on the existence of particular chords of the path $P$.  We first prove some lemmas based on $P$ being a longest path.

\begin{lemma}\label{lem:k0klempty}
$L_0$ and $L_{\ell}$ are empty.
\end{lemma}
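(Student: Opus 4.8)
The plan is to use nothing more than the maximality of $P$; the caterpillar structure coming from \cref{lem:treenoycat} is not needed here. First I would argue by contradiction: suppose $L_0 \neq \emptyset$, and pick $u \in L_0 = N_G(v_0) - V(P)$. By definition $u$ is adjacent to $v_0$ and $u \notin V(P)$, so the sequence $u\, v_0\, v_1 \dots v_\ell$ is a path in $G$ (all its vertices are distinct, since $v_0,\dots,v_\ell$ are distinct and $u$ is not among them). This path has length $\ell + 1$, strictly longer than $P$, contradicting the assumption that $P$ is a longest path in $G$. Hence $L_0 = \emptyset$.

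Next I would observe that the identical argument applies at the other endpoint: if $u \in L_\ell = N_G(v_\ell) - V(P)$, then $v_0\, v_1 \dots v_\ell\, u$ is a path of length $\ell + 1$, again contradicting maximality of $P$. So $L_\ell = \emptyset$ as well, completing the proof.

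There is essentially no obstacle here — this is a routine ``longest path cannot be extended at either end'' observation serving as the base case for the more delicate structural lemmas that follow. The only thing to be careful about is the bookkeeping that $u$ being outside $V(P)$ is exactly what guarantees the extended sequence is a genuine path rather than a closed walk, but that is immediate from the definition of $L_0$ (respectively $L_\ell$).
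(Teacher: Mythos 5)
Your argument is correct and is exactly the paper's proof: the paper simply notes that a nonempty $L_0$ or $L_\ell$ would let $P$ be extended, contradicting its maximality, and you have spelled out the same extension explicitly. No differences worth noting.
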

\begin{proof}
    If either $L_0$ or $L_{\ell}$ is nonempty, then $P$ would not be the longest path in $G$.  The conclusion follows.
\end{proof}

Next we consider intersections between the sets $L_i$.

\begin{lemma}\label{lem:intersectKiKi1}
For $i\in\{0,1,\dots,\ell-1\}$, $L_{i}\cap L_{i+1} = \emptyset$.
\end{lemma}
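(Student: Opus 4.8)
The plan is to argue directly by contradiction, using only the maximality of $P$. Suppose, for some $i \in \{0, 1, \dots, \ell-1\}$, that there is a vertex $u \in L_{i} \cap L_{i+1}$. By the definition $L_j = N_G(v_j) - V(P)$, this means that $u \notin V(P)$ and that $u$ is adjacent in $G$ to both $v_i$ and $v_{i+1}$.

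Now form the sequence obtained from $P$ by replacing the edge $v_i v_{i+1}$ with the two-edge path $v_i\, u\, v_{i+1}$, namely $v_0 v_1 \cdots v_i\, u\, v_{i+1} \cdots v_{\ell}$. Since $u \notin V(P)$, all of these vertices are distinct, so this is a path in $G$; moreover it has $\ell+2$ vertices, one more than $P$. This contradicts the choice of $P$ as a longest path in $G$, so no such $u$ can exist, i.e.\ $L_i \cap L_{i+1} = \emptyset$. (For $i=0$ and $i = \ell-1$ the statement is in any case immediate from \cref{lem:k0klempty}, since $L_0$ and $L_{\ell}$ are empty, but the insertion argument covers all $i$ uniformly.)

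I do not expect any real obstacle here. The only points requiring a moment's care are that the inserted sequence is a genuine (simple) path — which holds precisely because $u$ is not already a vertex of $P$ — and that it is strictly longer than $P$, which is clear from counting vertices.
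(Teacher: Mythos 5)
Your argument is correct and is exactly the paper's proof, just written out in more detail: the paper likewise obtains a longer path by routing through a vertex of $L_i \cap L_{i+1}$ via its two edges to $v_i$ and $v_{i+1}$. No issues.
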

\begin{proof}
If $w \in L_i\cap L_{i+1}$, then replacing $v_i v_{i+1}$ by $v_i w v_{i+1}$ gives a path longer than $P$.  Thus, $L_i\cap L_{i+1}=\emptyset$, as required.
\end{proof}

\begin{lemma}\label{lem:nonemptykikj}
If $L_i\cap L_j\ne \emptyset$ where $i<j$, then $j=i+2$, or $i=1$ and $j=\ell-1$.
\end{lemma}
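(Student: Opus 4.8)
The plan is to prove \cref{lem:nonemptykikj} by contradiction, producing a $Y$ subgraph whenever $L_i \cap L_j \ne \emptyset$ with $j \ge i+3$ and $(i,j) \ne (1,\ell-1)$. Fix a vertex $w \in L_i \cap L_j$, so $w$ is adjacent to both $v_i$ and $v_j$, and note that since $P$ is a longest path, \cref{lem:k0klempty} already forces $1 \le i$ and $j \le \ell-1$. The main idea: the path $v_i w v_j$ together with the two arms of $P$ hanging off of $v_i$ on the left and off of $v_j$ on the right gives us a natural candidate for a $Y$, but we must be careful that these pieces are genuinely vertex-disjoint and long enough.

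\textbf{First}, I would dispose of the ``short arm'' obstructions. If $i \ge 2$ and $j \le \ell-2$, then $v_i$ has $v_{i-1}, v_{i-2}$ on its left (available since $i \ge 2$) and $v_j$ has $v_{j+1}, v_{j+2}$ on its right (available since $j \le \ell - 2$); because $j \ge i+3$, the vertices $v_{i-2}, v_{i-1}, v_i$ and $v_j, v_{j+1}, v_{j+2}$ are all distinct, and $w \notin V(P)$, so the three paths $v_i v_{i-1} v_{i-2}$, $v_i v_{i+1} v_{i+2}$... wait, I need the center to have degree $3$ — let me instead take $w$ as the center: $w v_i v_{i-1}$, $w v_j v_{j+1}$, and $w \cdot (\text{something})$. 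The cleanest choice is to center at $v_i$: the legs $v_i v_{i-1} v_{i-2}$, $v_i w v_j$, and $v_i v_{i+1} v_{i+2}$ form a $Y$ provided $i \ge 2$ and the leg through $w$ avoids $v_{i+1}, v_{i+2}$ — which holds since $w \notin V(P)$ and $v_j \ne v_{i+1}, v_{i+2}$ as $j \ge i+3$. So the case $i \ge 2$ with $j \le \ell-2$ is immediate; symmetrically, centering at $v_j$ handles $j \le \ell - 2$ with $i \ge 2$. Hence the only surviving cases have $i = 1$ or $j = \ell - 1$.

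\textbf{Next}, suppose $i = 1$ (the case $j = \ell-1$ is symmetric by reversing $P$). Then we are assuming $j \ge 4$ and $(1,j) \ne (1,\ell-1)$, i.e., $j \le \ell - 2$. Now center the claw at $v_j$: the legs $v_j v_{j+1} v_{j+2}$ (available since $j \le \ell-2$), $v_j v_{j-1} v_{j-2}$ (available since $j \ge 4 \ge 3$... actually $j \ge 3$ suffices, and $j \ge 4$), and $v_j w v_1$. For these to form a $Y$ I need $v_1 \notin \{v_{j-1}, v_{j-2}, v_{j+1}, v_{j+2}\}$: since $j \ge 4$ we have $j - 2 \ge 2 > 1$, so $v_1$ is distinct from all of $v_{j-2}, v_{j-1}, v_{j+1}, v_{j+2}$, and $w \notin V(P)$. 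This yields a $Y$, contradiction. Combined with the symmetric argument when $j = \ell-1$ (which forces $i \ge 2$, and then we center at $v_i$ using $v_i v_{i-1} v_{i-2}$, $v_i v_{i+1} v_{i+2}$, $v_i w v_{\ell-1}$, needing $v_{\ell-1}$ distinct from $v_{i \pm 1}, v_{i \pm 2}$, which holds since $i \le \ell - 3$), every case with $j \ge i+3$ and $(i,j) \ne (1, \ell-1)$ produces a $Y$.

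\textbf{The one place requiring care} is verifying that there is always a pair of ``short arms'' of length two available at the chosen center — i.e., that when we center at $v_i$ we really have $i \ge 2$ and $i + 2 \le \ell$, and when we center at $v_j$ we have $j \le \ell-2$ and $j - 2 \ge 0$. The gap hypothesis $j \ge i+3$ is exactly what guarantees that whichever endpoint we center at, the leg through $w$ lands far enough away to be disjoint from the path-arms; and the excluded case $(i,j)=(1,\ell-1)$ is precisely the configuration where \emph{neither} endpoint has two spare path-vertices on its ``outer'' side while also having the $w$-leg land clear, so no $Y$ can be extracted (consistent with necklaces genuinely having such chords). I expect no serious obstacle beyond bookkeeping the index inequalities; the argument is a finite case check driven by where $i$ and $j$ sit relative to $1$ and $\ell-1$.
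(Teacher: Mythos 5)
Your proposal is correct and takes essentially the same route as the paper: center the $Y$ at $v_i$ (legs $v_i v_{i-1} v_{i-2}$, $v_i v_{i+1} v_{i+2}$, $v_i w v_j$) when $i \ge 2$, symmetrically at $v_j$ when $j \le \ell-2$, leaving only $(i,j)=(1,\ell-1)$; your extra subdivision of the cases is redundant but harmless. The only omission is the possibility $j=i+1$, which the negated conclusion does not exclude but which is ruled out by \cref{lem:intersectKiKi1} and should be cited.
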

\begin{proof}
By \cref{lem:k0klempty,lem:intersectKiKi1} we have $i \ge 1$, $j-i \ge 2$, and $j \le \ell-1$.  Therefore, $\ell \ge 4$.  If $\ell = 4$ we must have $i=1$ and $j=3$, which satisfies the conclusion.  So we may assume that $\ell \ge 5$.

Let $w\in L_i\cap L_j$ and suppose that $j > i+2$.
If $i\ge 2$, then the paths $v_i v_{i-1} v_{i-2}$, $v_i v_{i+1} v_{i+2}$ and $v_i w v_j$ form a $Y$ subgraph.  If $j \le \ell-2$, there is a symmetric $Y$ subgraph.  The only remaining case is $i=1$ and $j=\ell-1$, so either this holds or $j=i+2$. 
\end{proof}

The following lemma means that an edge in $E(G)-E(P)$ belongs to at most one vee, and a vertex of $V(G)-V(P) = \Ll$ has degree at most $2$.

\begin{lemma}\label{lem:nonemptyintersect3}
If $i$, $j$, and $k$ are distinct, then $L_i\cap L_j\cap L_k=\emptyset$.
\end{lemma}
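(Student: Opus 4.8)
The plan is to argue by contradiction, using \cref{lem:nonemptykikj} to pin down the only possible configuration and then producing a forbidden $Y$. Suppose some vertex $w$ lies in $L_i \cap L_j \cap L_k$ with $i < j < k$; then $w \notin V(P)$ and $w$ is adjacent to each of $v_i$, $v_j$, $v_k$. Each of the pairwise intersections $L_i \cap L_j$, $L_i \cap L_k$, $L_j \cap L_k$ contains $w$, hence is nonempty, so I can apply \cref{lem:nonemptykikj} to all three pairs.

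First I would analyze the pair $(i,k)$. \cref{lem:nonemptykikj} gives either $k = i+2$, or $i = 1$ and $k = \ell-1$. In the first case $i < j < k = i+2$ forces $j = i+1$, so $w \in L_i \cap L_{i+1}$, contradicting \cref{lem:intersectKiKi1}. Hence $i = 1$ and $k = \ell-1$. Applying \cref{lem:nonemptykikj} to the pair $(i,j) = (1,j)$, where $j < k = \ell-1$, the option $j = \ell-1$ is excluded, so $j = i+2 = 3$. Applying \cref{lem:nonemptykikj} to the pair $(j,k) = (3,\ell-1)$, the option $j = 1$ is impossible, so $\ell - 1 = j+2 = 5$, i.e.\ $\ell = 6$.

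It then remains to rule out the single surviving configuration: $\ell = 6$ with $w$ adjacent to $v_1$, $v_3$, and $v_5$. In this case the three paths $w v_1 v_0$, $w v_3 v_2$, and $w v_5 v_6$ use the six distinct path vertices $v_0, v_1, v_2, v_3, v_5, v_6$ together with $w$, and they pairwise meet only in $w$, so together they form a subgraph isomorphic to $Y$ with center $w$ — contradicting the assumption that $G$ has no $Y$ subgraph. This contradiction completes the proof.

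The argument is short, and essentially all of it is the case bookkeeping from \cref{lem:nonemptykikj}; the only minor point of ingenuity is recognizing that in the leftover case $\ell = 6$ the copy of $Y$ should be centered at the off-path vertex $w$ (with legs running back into $P$), rather than at a vertex of $P$.
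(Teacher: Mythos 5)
Your proof is correct. The core of the argument is the same as the paper's — the contradiction in both cases is a copy of $Y$ centered at the off-path vertex $w$ with three legs running into $P$ — but your route to it is more roundabout. You iterate \cref{lem:nonemptykikj} over all three pairs to pin the configuration down to the single possibility $i=1$, $j=3$, $k=5$, $\ell=6$, and only then exhibit the $Y$ (with two legs pointing backward and one forward). The paper instead uses \cref{lem:k0klempty,lem:nonemptykikj} only to extract the spacing bounds $i\ge 1$, $j\ge i+2$, $j+2\le k\le \ell-1$, which already guarantee that the three forward-pointing paths $w v_i v_{i+1}$, $w v_j v_{j+1}$, $w v_k v_{k+1}$ are disjoint except at $w$ and hence form a $Y$ in every allowed case, with no case analysis at all. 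Both arguments are valid; yours buys a complete classification of the (empty) set of surviving configurations at the cost of extra bookkeeping, while the paper's uniform construction is shorter and does not depend on which branch of \cref{lem:nonemptykikj} occurs.
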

\begin{proof}
Suppose that $L_i\cap L_j\cap L_k\ne \emptyset$, where $i < j < k$. 
\cref{lem:k0klempty,lem:nonemptykikj} imply that $i\ge 1$, $j\ge i+2$, and $j+2 \le k \le \ell-1$.     
If $w\in L_i\cap L_j\cap L_k$, then the paths $w v_i v_{i+1}$, $w v_j v_{j+1}$, $w v_k v_{k+1}$ form a $Y$ subgraph.
Thus, $L_i\cap L_j\cap L_k=\emptyset$, as required.
\end{proof}

\begin{lemma}\label{lem:veecross}
A vee does not cross a $2$-chord or another vee.
\end{lemma}

\begin{proof}
Suppose the vee $v_{i} w v_{i+2}$ crosses a $2$-chord $v_{i+1} v_j$.  We assume that $j = i-1$; the case $j=i+3$ is symmetric.  We have a path $v_0 v_1 \dots v_{i-1} v_{i+1} v_i w v_{i+2} v_{i+3} \dots v_\ell$ which is longer than $P$, a contradiction.  We can similarly find a path longer than $P$ when two vees cross.
\end{proof}

While a vee cannot cross a $2$-chord or another vee, we will see that it is possible to have multiple vees that cover the same pair of edges, or vees that cover the same pair of edges as a $2$-chord.

Pointiness is used to prove the following result.

\begin{lemma}\label{lem:endchord}
If $v_0 v_i$ is a chord, then $L_{i-1} = \emptyset$, and $L_{i-2}$ contains no leaves of $G$.  Similarly, if $v_{\ell-i} v_\ell$ is a chord, then $L_{\ell-i+1} = \emptyset$, and $L_{\ell-i+2}$ contains no leaves of $G$.
\end{lemma}

\begin{proof}
The two statements are symmetric, so we prove only the first.  Note that $i \ge 2$.  If $w \in L_{i-1}$ then $w v_{i-1} v_{i-2} \dots v_0 v_i v_{i+1} \dots v_\ell$ is a longer path than $P$.  If $L_{i-2}$ contains a leaf $w$ of $G$ then $w v_{i-2} v_{i-3} \dots\allowbreak v_0 v_i v_{i+1} \dots v_\ell$ is a longest path that is pointier than $P$.
\end{proof}

We now deal with the situations where $\ell \le 4$, so that we can focus on the general case where $\ell \ge 5$.
If $\ell=0$ or $1$ then $G$ is either $K_1$ or $K_2 \cong K_{1,1,0}$, and hence satisfies both conclusions of \cref{thm:noY}.  The cases where $2 \le \ell \le 4$ are handled by \cref{lem:lengthl2,lem:lengthl3,lem:lengthl4}.

\begin{lemma}\label{lem:lengthl2}
If $\ell = 2$, then $G$ is a triangle or a star.  Hence, $G$ satisfies \crefWithTheorem{smallG}.
\end{lemma}

\begin{proof}
By \cref{lem:k0klempty}, $L_0=L_2=\emptyset$, so $V(G)-V(P) = L_1$.  If $v_0 v_2 \notin E(G)$, then $G$ is a star, and if $v_0 v_2 \in E(G)$, then $L_1 = \emptyset$ by \cref{lem:endchord}, so $G$ is a triangle.
\end{proof}

\begin{lemma}\label{lem:lengthl3}
If $\ell=3$, then $G$ is a double star, a graph of order at most $4$, or a graph obtained by attaching pendant edges to exactly one vertex of a triangle.  Hence, $G$ satisfies \crefWithTheorem{smallG}.
\end{lemma}

\begin{proof}
By \cref{lem:k0klempty}, $L_0 = L_3 = \emptyset$, and by \cref{lem:intersectKiKi1}, $L_1 \cap L_2 = \emptyset$.
Therefore, if $P$ has no chords then $P$ is a double star.  If either $v_0 v_3 \in E(G)$, or $v_0v_2, v_1 v_3 \in E(G)$, then $L_1 = L_2 = \emptyset$ by \cref{lem:endchord}, and $G$ has order at most $4$.  So we may assume that there is exactly one chord, either $v_0 v_2$ or $v_1 v_3$.  Since these cases are symmetric, we may suppose that the only chord is $v_0 v_2$.  Then $L_1 = \emptyset$ by \cref{lem:endchord}, and the vertices in $L_2$ are clones of $v_3$, so $G$ is a triangle with pendant edges at one vertex.
\end{proof}

\begin{lemma}\label{lem:lengthl4}
If $\ell=4$, then $G$ satisfies \crefWithTheorem{smallG}, or $G$ is a spiked strand and satisfies \crefWithTheorem{infam}.
\end{lemma}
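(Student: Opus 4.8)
The plan is to use the structural lemmas to reduce $G$ to finitely many configurations, treating each by either reducing $G$ to an induced subgraph on at most six vertices (giving \crefWithTheorem{smallG}) or exhibiting a bead decomposition (giving \crefWithTheorem{infam}). With $P=v_0v_1v_2v_3v_4$, \cref{lem:k0klempty,lem:intersectKiKi1,lem:edgesinK} give that $V(P)$ is a vertex cover, that $\Ll=L_1\cup L_2\cup L_3$ is independent, and, since $L_0=L_4=\emptyset$ and consecutive $L_i$ are disjoint, that every $w\notin V(P)$ has $N(w)$ equal to one of $\{v_1\}$, $\{v_2\}$, $\{v_3\}$, or $\{v_1,v_3\}$, the last meaning $v_1wv_3$ is a vee. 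Hence $G$ is determined by which of the six chords $v_0v_2$, $v_1v_3$, $v_2v_4$, $v_0v_3$, $v_1v_4$, $v_0v_4$ it contains and by the sizes of $L_1\setminus L_3$, $L_2$, $L_3\setminus L_1$, and $L_1\cap L_3$.

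Next I would exploit that $P$ is a longest path, each time by displaying a path of length $5$ when a conclusion fails. If $v_0v_4\in E(G)$, then any vertex off $P$, together with the cycle $v_0v_1v_2v_3v_4v_0$, yields such a path, so $V(G)=V(P)$ and \crefWithTheorem{smallG} holds. Otherwise one obtains the implications $v_0v_2\in E(G)\Rightarrow L_1=\emptyset$, $v_2v_4\in E(G)\Rightarrow L_3=\emptyset$, $v_0v_3\in E(G)\Rightarrow L_2=\emptyset$, $v_1v_4\in E(G)\Rightarrow L_2=\emptyset$, and $L_1\cap L_3\ne\emptyset\Rightarrow L_2=\emptyset$ (for instance, if $w\in L_1\cap L_3$ and $b\in L_2$ then $v_0v_1wv_3v_2b$ has length $5$).

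Then I would split into cases. If $v_0v_2\in E(G)$ or $v_2v_4\in E(G)$, or if $L_1\cap L_3=\emptyset$ and at most one of $v_0v_3,v_1v_4$ lies in $E(G)$, then every vertex off $P$ is a leaf, and a count of the leaf clone classes (noting that a pendant at $v_3$ may share a class with $v_4$, and similarly at $v_1$ with $v_0$) shows $G$ is obtained from an induced subgraph on at most six vertices by cloning leaves, so \crefWithTheorem{smallG} holds. In the two remaining cases I would build a bead decomposition. If $L_1\cap L_3\ne\emptyset$, then $v_0v_2,v_2v_4,v_0v_4\notin E(G)$, so $N(v_1)\cap N(v_3)$ --- which consists of $v_2$, the middle vertices of the vees, and also $v_0$ if $v_0v_3\in E(G)$ and $v_4$ if $v_1v_4\in E(G)$ --- is independent of size at least $2$; hence $G[\{v_1,v_3\}\cup(N(v_1)\cap N(v_3))]$ is the bead $K_{2,t}$ (if $v_1v_3\notin E(G)$) or $K_{1,1,t}$ (if $v_1v_3\in E(G)$), with $v_1$ and $v_3$ as its primary vertices; whichever of $v_0,v_4$ is not already in this set is pendant at $v_1$ or $v_3$ and forms a $K_{1,1,0}$ end-bead, and the remaining pendants at $v_1$ and $v_3$ become spikes, so $G$ is a spiked strand. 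If instead $L_1\cap L_3=\emptyset$ but both $v_0v_3$ and $v_1v_4$ are in $E(G)$, then $L_2=\emptyset$ and $G[\{v_0,v_1,v_2,v_3,v_4\}]$ is already the bead $K_{2,3}$ or $K_{1,1,3}$ with primary vertices $v_1,v_3$, to which the pendants at $v_1$ and $v_3$ attach as end-beads and spikes; again $G$ is a spiked strand. Either way \crefWithTheorem{infam} holds.

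The main obstacle is the bookkeeping in these last two cases: for every surviving combination of chords and pendant sets one must check that the candidate subgraph really is one of the four permitted bead types with the correct split into primary and secondary vertices, and that $v_1$ and $v_3$ each end up in at most two beads so that the result is a genuine spiked strand rather than an illegal stringing; here the independence of $\Ll$ is exactly what keeps the candidate bead from acquiring extra edges among its would-be secondary vertices. A lesser nuisance is verifying the ``at most six vertices'' bound in every no-vee case.
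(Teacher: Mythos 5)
Your proposal is correct and follows essentially the same route as the paper: rule out $v_0v_4$, show each chord incident with $v_0$ or $v_4$ forces the corresponding $L_i$ to be empty via a longer path, and then either reduce to a six-vertex graph by merging leaf clone classes (using that pendants at $v_1$, $v_3$ merge with $v_0$, $v_4$ when those ends have no chords) or exhibit the $K_{2,t}$/$K_{1,1,t}$ bead on $\{v_1,v_3\}$ with $K_{1,1,0}$ end-beads and spikes. The only difference is organizational --- you branch on chords first where the paper branches on $L_1\cap L_3$ and on which $L_i$ are nonempty --- and the bookkeeping you flag does check out in every subcase.
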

\begin{proof}
\setcounter{claim}{0}
\cref{lem:k0klempty} implies that $L_0=L_4=\emptyset$, and \cref{lem:intersectKiKi1} implies that $L_1 \cap L_2 = L_2 \cap L_3 = \emptyset$.

\begin{claim}\label{clm:a1}
We may assume that $v_0 v_4 \notin E(G)$.  Otherwise, $v_0 v_4 \in E(G)$, and there is a cycle $C$ with $V(C) = V(P)$. If $L_1 \cup L_2 \cup L_3 \ne \emptyset$, there is a path longer than $P$, which is impossible.  So, $G$ has five vertices, and \crefWithTheorem{smallG} holds.
\end{claim}

\begin{claim}\label{clm:a2}
By \cref{lem:endchord}, none of the following happen: (a) $L_1 \ne \emptyset$ and $v_0 v_2 \in E(G)$,
(b) $L_1$ contains a leaf and $v_0 v_3 \in E(G)$,
(c) $L_2 \ne \emptyset$ and either $v_0 v_3 \in E(G)$ or $v_1 v_4 \in E(G)$, (d) $L_3 \ne \emptyset$ and $v_2 v_4 \in E(G)$,
(e) $L_3$ contains a leaf and $v_1 v_4 \in E(G)$.
\end{claim}

Suppose first that $L_1 \cap L_3 \ne \emptyset$.  If $L_2 \ne \emptyset$ then there is a path longer than $P$, which is impossible, so $L_2 = \emptyset$.  By \cref{clm:a1,clm:a2} $v_0 v_4, v_0 v_2, v_2 v_4 \notin E(G)$, so the only possible chords are $v_0 v_3$, $v_1 v_3$ and $v_1 v_4$.
If $v_1 v_3 \in E(G)$ there is a $K_{1,1,t}$ bead, $t \ge 2$, with primary vertices $v_1, v_3$ and secondary vertices including $v_2$, $L_1 \cap L_3$ and possibly $v_0$ (if $v_0 v_3 \in E(G)$) or $v_4$ (if $v_1 v_4 \in E(G)$); the remaining edges form at most two $K_{1,1,0}$ beads and spikes.  If $v_1 v_3 \notin E(G)$ we have a similar situation but with a $K_{2,t}$ bead, $t \ge 2$.  In either case $G$ is a spiked strand.

Now suppose that $L_1 \cap L_3 = \emptyset$.  So all vertices in $L_1 \cup L_2 \cup L_3$ are leaves.  If $L_1 \ne \emptyset$ then by \cref{clm:a1,clm:a2} there are no chords incident with $v_0$.  So whether $L_1 = \emptyset$ or not, all elements of $L_1$ are clones of $v_0$.  Similarly, all elements of $L_3$ are clones of $v_4$.  All elements of $L_2$ are clones.  Therefore, \crefWithTheorem{smallG} holds.
\end{proof}

The results for small $\ell$ can be summarized as follows.

\begin{proposition}\label{prop:small}
Suppose $G$ is a connected graph with no $Y$ subgraph and $P = v_0 v_1 \dots v_\ell$ is a longest path in $G$, where $\ell \le 4$.  Then $G$ either satisfies \crefWithTheorem{smallG}, or $G$ is a spiked strand and satisfies \crefWithTheorem{infam}.
\end{proposition}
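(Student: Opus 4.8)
The plan is to obtain \cref{prop:small} purely by assembling the cases already settled, organised by the value of $\ell$. Since $G$ is connected, $\ell \ge 0$; and $\ell \le 4$ by hypothesis, so there are only five values to check, and the earlier lemmas have been set up precisely to cover them.

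First I would dispose of the two degenerate cases $\ell \in \{0,1\}$. As noted in the discussion preceding \cref{lem:lengthl23}, here $V(P)$ is both a dominating set and (by \cref{lem:edgesinK}) a vertex cover, which forces $G \cong K_1$ or $G \cong K_2$; either way $G$ arises from a graph on at most six vertices by cloning no leaves, so \crefWithTheorem{smallG} holds. Next, for $\ell \in \{2,3\}$, \cref{lem:lengthl23} states exactly that \crefWithTheorem{smallG} holds. Finally, for $\ell = 4$, \cref{lem:lengthl4} states exactly that either \crefWithTheorem{smallG} holds, or $G$ is a spiked strand and \crefWithTheorem{infam} holds. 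Taking the union of these five cases yields the proposition.

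I do not expect any genuine obstacle here: all of the structural content already lives in \cref{lem:lengthl23} and \cref{lem:lengthl4} (together with the one-line remarks for $\ell \le 1$), and this proposition merely repackages them into a single statement convenient for the later sections, where the complementary range $\ell \ge 5$ is treated. The only point to be mindful of is not to leave a gap at the small values $\ell \le 1$, since the two named lemmas begin at $\ell = 2$; but those cases are immediate from the vertex-cover observation.
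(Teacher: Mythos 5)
Your proposal is correct and matches the paper exactly: the proposition is stated there as a summary, assembled from the remark preceding \cref{lem:lengthl23} (that $\ell\in\{0,1\}$ forces $G\cong K_1$ or $K_2$), \cref{lem:lengthl23} for $\ell\in\{2,3\}$, and \cref{lem:lengthl4} for $\ell=4$. Your extra care with the $\ell\le 1$ cases via \cref{lem:k0klempty} and the domination property of $V(P)$ is exactly the justification the paper leaves implicit.
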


We now prove some results that apply when $\ell \ge 5$.

We first consider neighbors of the vertex enclosed by a vee: they must lie on $P$.

\begin{lemma}\label{lem:vwithpend}
Suppose that $\ell \ge 5$ and $1\le i\le \ell-1$.  Then we do not have $L_i\ne \emptyset$ and $L_{i-1}\cap L_{i+1} \ne \emptyset$.
\end{lemma}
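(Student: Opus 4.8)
The plan is to argue by contradiction. Suppose $1\le i\le\ell-1$, that $L_i\ne\emptyset$, and that $L_{i-1}\cap L_{i+1}\ne\emptyset$; pick $u\in L_i$ and $w\in L_{i-1}\cap L_{i+1}$. I will then exhibit a subdivided claw in $G$ whose center is either $v_{i-1}$ or $v_{i+1}$, one of whose legs runs along $P$, one of whose legs is $v_i u$ (prefixed by the relevant path edge), and one of whose legs uses the vee edge to $w$.

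First I would record the adjacencies and distinctness facts that the claw needs: $u$ is adjacent to $v_i$, $w$ is adjacent to both $v_{i-1}$ and $v_{i+1}$, and consecutive vertices of $P$ are adjacent. Since $w\in L_{i-1}$ and $L_{i-1}\cap L_i=\emptyset$ by \cref{lem:intersectKiKi1}, we have $w\notin L_i$, hence $w\ne u$; also $u,w\notin V(P)$ by the definitions of the sets $L_j$. So the pairs $\{v_i,u\}$, $\{w,v_{i+1}\}$ (or $\{w,v_{i-1}\}$), and any two consecutive path vertices among $v_{i-3},\dots,v_{i+3}$ are pairwise disjoint and avoid the chosen center.

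Next comes the case split, which is driven entirely by the standing hypothesis $\ell\ge5$. Every $i$ with $1\le i\le\ell-1$ satisfies $i\ge3$ or $i\le\ell-3$, since otherwise $i\le2$ and $i\ge\ell-2$ together force $\ell\le4$. If $i\ge3$, then the three paths $v_{i-1}v_{i-2}v_{i-3}$, $v_{i-1}v_iu$, and $v_{i-1}wv_{i+1}$ meet only in $v_{i-1}$ and form a $Y$ subgraph centered at $v_{i-1}$, a contradiction. If instead $i\le\ell-3$, the symmetric choice of $v_{i+1}v_{i+2}v_{i+3}$, $v_{i+1}v_iu$, and $v_{i+1}wv_{i-1}$ gives a $Y$ centered at $v_{i+1}$. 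Either way we reach a contradiction, so no such $i$ exists.

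I do not expect a real obstacle here; the argument is short. The one point requiring care is confirming that the seven vertices of the claimed $Y$ are genuinely distinct — this follows from $u,w\notin V(P)$, from $w\ne u$, and from the distinctness of the path indices $i-3,i-2,i-1,i,i+1$ (resp. $i-1,i,i+1,i+2,i+3$) — and checking that $\ell\ge5$ is exactly what guarantees one of the two index conditions. As an alternative one could first reroute $P$ through the vee to the longest path $P'=v_0\dots v_{i-1}\,w\,v_{i+1}\dots v_\ell$ and invoke earlier lemmas, but the direct claw-finding argument above avoids that detour.
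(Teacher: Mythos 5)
Your argument is correct and is essentially the paper's own proof: the same appeal to \cref{lem:intersectKiKi1} for $u\ne w$, the same two $Y$ subgraphs centered at $v_{i-1}$ (when $i\ge 3$) or $v_{i+1}$ (when $i\le \ell-3$), and the same use of $\ell\ge 5$ to see that these cases are exhaustive. No changes needed.
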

\begin{proof}
Suppose that $L_i\ne \emptyset$ and $L_{i-1}\cap L_{i+1}\ne \emptyset$.  
Then $2 \le i \le l-2$ because $L_0 = L_{\ell} = \emptyset$.
Let $w\in L_i$ and $u\in L_{i-1}\cap L_{i+1}$.  Then $w \ne u$ by \cref{lem:intersectKiKi1}.
If $i \ge 3$ then the paths $v_{i-1} v_{i-2} v_{i-3}$, $v_{i-1} v_i w$, $v_{i-1} u v_{i+1}$ form a $Y$ subgraph, which is a contradiction.  If $i \le \ell-3$ there is a symmetric $Y$ subgraph.  Since $\ell \ge 5$, these cases cover all values of $i$.
\end{proof}

Similarly, neighbors of the vertex enclosed by a $2$-chord must lie on $P$.

\begin{lemma}\label{lem:2chordpendant}
If $\ell \ge 5$ and $v_{i-1} v_{i+1} \in E(G)$ where $1\le i\le \ell-1$, then $L_i = \emptyset$.
\end{lemma}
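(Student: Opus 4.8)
The plan is to argue by contradiction. Suppose $v_{i-1}v_{i+1}\in E(G)$ with $2\le i\le\ell-2$ but $L_i\ne\emptyset$, and fix some $w\in L_i$. By definition of $L_i$ we have $w\notin V(P)$ and $wv_i\in E(G)$, and the bound $2\le i\le\ell-2$ guarantees that the path vertices $v_{i-2}$ and $v_{i+2}$ exist. The goal is to produce a $Y$ subgraph, contradicting the standing hypothesis on $G$.

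First I would treat the case $i\ge 3$. Here I claim the three paths $v_{i-1}v_{i-2}v_{i-3}$, $v_{i-1}v_iw$, and $v_{i-1}v_{i+1}v_{i+2}$ form a $Y$ with center $v_{i-1}$. Each is a path of length $2$ out of $v_{i-1}$; all six edges used ($v_{i-2}v_{i-3}$, $v_{i-1}v_{i-2}$, $v_iw$, $v_{i-1}v_i$, $v_{i-1}v_{i+1}$, $v_{i+1}v_{i+2}$) lie in $E(G)$, the edge $v_{i-1}v_{i+1}$ being exactly the given $2$-chord; and the seven vertices $v_{i-3},v_{i-2},v_{i-1},v_i,v_{i+1},v_{i+2},w$ are pairwise distinct because the path vertices are distinct and $w\notin V(P)$. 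This is the contradiction.

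The case $i\le\ell-3$ is symmetric, using center $v_{i+1}$ and the paths $v_{i+1}v_{i+2}v_{i+3}$, $v_{i+1}v_iw$, and $v_{i+1}v_{i-1}v_{i-2}$. To finish, I would note that these two cases exhaust all possibilities: since $2\le i\le\ell-2$ and $\ell\ge 5$, if $i=2$ then $i\le\ell-3$, and if $i=\ell-2$ then $i\ge 3$, while any intermediate $i$ satisfies both. Hence $L_i=\emptyset$. There is no real obstacle in this proof; the only point requiring a moment's care is verifying that the seven vertices of each candidate $Y$ are genuinely distinct and that one of the two symmetric constructions is always available, both of which follow immediately from $w\notin V(P)$ and $\ell\ge 5$.
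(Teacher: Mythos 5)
Your proof is correct and is essentially the paper's argument made explicit: the paper likewise produces a $Y$ centered at $v_{i-1}$ when $3\le i\le\ell-2$ and at $v_{i+1}$ when $i=2$, relying on $\ell\ge 5$ to ensure one of the two constructions applies. Your version just spells out the three paths and the distinctness check that the paper leaves implicit.
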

\begin{proof}
Suppose that $v_{i-1} v_{i+1} \in E(G)$, and assume that $L_i \ne \emptyset$.
Then $i \ne 1, \ell-1$ by \cref{lem:endchord}.
If $3 \le i\le \ell- 2$, then $G$ has a $Y$ subgraph centered at $v_{i-1}$ because $\ell \ge 5$.
If $i = 2$, then $G$ has a $Y$ subgraph centered at $v_{i+1}=v_3$.
In all cases we reach a contradiction, so $L_i = \emptyset$.
\end{proof}

To conclude this section, we characterize the possible chords of $P$.
\begin{lemma}\label{lem:pos3pchords}
If $\ell \ge 5$, then every chord of $P$ is one of the following: $v_i v_{i+2}$ for $0 \le i \le \ell-2$, $v_0 v_3$, $v_{\ell-3} v_\ell$, $v_0 v_{\ell-1}$, $v_0 v_\ell$, $v_1 v_{\ell-1}$, and $v_1 v_\ell$.
\end{lemma}
\begin{proof}
Suppose we have a chord $v_i v_j$ with $j-i \ge 3$.
If $i \ge 1$ and $j \le \ell-2$ the paths $v_j v_i v_{i-1}$, $v_j v_{j-1} v_{j-2}$, $v_j v_{j+1} v_{j+2}$ form a $Y$ subgraph.  There is a symmetric $Y$ subgraph if $i \ge 2$ and $j \le \ell-1$.  These cover all cases except $i=0$, $j=\ell$, and $(i,j)=(1,\ell-1)$.
If $i=0$ and $4 \le j \le \ell-2$ the paths $v_j v_0 v_1$, $v_j v_{j-1} v_{j-2}$, $v_j v_{j+1} v_{j+2}$ form a $Y$ subgraph.  There is a symmetric $Y$ subgraph if $2 \le i \le \ell-4$ and $j=\ell$.  These cover all cases with $i=0$ or $j=\ell$ except $(i,j) = (0,3)$, $(0,\ell-1)$, $(0,\ell)$, $(1,\ell)$, and $(\ell-3,\ell)$.
Thus, we must have $j-i=2$ or one of the six listed cases.
\end{proof}

\section{No long cycle}\label{Sec:PathCase}

In this section, we consider the case where (roughly) $G$ does not have a long cycle.
Specifically, we consider a pointiest longest path $P$, and we assume throughout this section that $\ell \ge 5$, $P$ does not have a chord from $v_0$ or $v_1$ to $v_{\ell-1}$ or $v_{\ell}$, and $L_1 \cap L_{\ell-1} = \emptyset$.  Thus, the only chords allowed are $v_0 v_3$, $v_{\ell-3} v_\ell$, or $v_i v_{i+2}$ for some $i$, and if $L_i \cap L_j \ne \emptyset$ for $i < j$, then we must have $j = i+2$.
We will address the other cases for $\ell \ge 5$ in \cref{Sec:CycleCase}.

Except in some situations covered by \crefWithTheorem{smallG}, $P$ does not have three consecutive 2-chords.

\begin{lemma}\label{lem:3con2chords}
If $P$ has three 2-chords $v_i v_{i+2}$, $v_{i+1} v_{i+3}$, and $v_{i+2} v_{i+4}$, where $0 \le i \le \ell-4$, then $\ell=5$ and $G$ satisfies \crefWithTheorem{smallG}.
\end{lemma}
\begin{proof}
Assume that the three given $2$-chords exist.

Suppose first that $\ell \ge 6$.
If $i=0$, then the paths $v_4 v_5 v_6$, $v_4 v_3 v_1$, $v_4 v_2 v_0$ form a $Y$ subgraph.
If $i = \ell-4$ there is a symmetric $Y$ subgraph.
If $i\in\{1,\dots,\ell-5\}$, then the paths $v_{i+2} v_i v_{i-1}$, $v_{i+2} v_{i+1} v_{i+3}$, $v_{i+2} v_{i+4} v_{i+5}$ form a $Y$ subgraph.

Suppose now that $\ell=5$.  
Either $i=0$ or $i=\ell-4=1$.  These cases are symmetric.
Without loss of generality, suppose $i=0$.
Then $L_1 = L_2 = L_3 = \emptyset$ by \cref{lem:2chordpendant}.
If $L_4 = \emptyset$ then \crefWithTheorem{smallG} holds.
So suppose $L_4 \ne \emptyset$.  Then all elements of $L_4$ are leaves.  By the assumptions of this section $v_0v_5, v_1v_5 \notin E(G)$, and $v_2v_5, v_3v_5 \notin E(G)$ by \cref{lem:endchord}.  So $v_5$ is a leaf, all elements of $L_4$ are clones of $v_5$, and \crefWithTheorem{smallG} holds.
\end{proof}

Except in some situations covered by \crefWithTheorem{smallG}, a $3$-chord does not cross $2$-chords or the other possible $3$-chord.

\begin{lemma}\label{lem:3chordcross}
If the two $3$-chords $v_0 v_3$ and $v_{\ell-3} v_\ell$ cross, then $\ell=5$ and $G$ satisfies \crefWithTheorem{smallG}.
\end{lemma}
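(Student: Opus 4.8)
The plan is to show that the crossing hypothesis forces $\ell=5$, and that in that situation $G$ has only six vertices. By the definition of crossing chords applied to $v_0v_3$ (endpoints indexed $0<3$) and $v_{\ell-3}v_\ell$ (endpoints indexed $\ell-3<\ell$), the two chords cross exactly when $0<\ell-3<3<\ell$ or $\ell-3<0<\ell<3$. The second alternative is impossible since $\ell\ge 5$, and the first forces $\ell-3<3$, i.e.\ $\ell<6$, so $\ell=5$. Hence the two $3$-chords are $v_0v_3$ and $v_2v_5$, and $P=v_0v_1v_2v_3v_4v_5$.

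The crux is then to observe that these chords, together with edges of $P$, produce a Hamiltonian cycle on $V(P)$: the edges $v_0v_1$, $v_1v_2$, $v_2v_5$, $v_5v_4$, $v_4v_3$, $v_3v_0$ form a $6$-cycle $C$ with $V(C)=V(P)$ (the edges $v_2v_5$ and $v_3v_0$ are present by hypothesis, the rest are edges of $P$). With $C$ in hand, I would rule out any vertex off $P$. By \cref{lem:k0klempty} we have $L_0=L_5=\emptyset$, so any vertex of $G$ outside $P$ lies in $L_i$ for some $i\in\{1,2,3,4\}$; let $w$ be such a vertex, so $wv_i\in E(G)$. Deleting from $C$ an edge incident with $v_i$ leaves a path on $V(P)$ of length $5$ with $v_i$ as an endpoint, and prepending $wv_i$ yields a path of length $6>\ell$, contradicting the choice of $P$ as a longest path. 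Therefore $V(G)=V(P)$, so $|V(G)|=6$ and $G$ is obtained from a graph on at most six vertices by cloning no leaves; thus \crefWithTheorem{smallG} holds.

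I do not anticipate a real obstacle: the argument is a short deduction from the definition of crossing and the maximality of $P$, in the same spirit as \cref{clm:a1} in the proof of \cref{lem:lengthl4}. The only points needing care are the arithmetic that pins $\ell$ down to exactly $5$, and confirming that all six edges of $C$ genuinely lie in $G$. Note that, unlike the neighboring lemmas in this section, this argument uses neither the section-wide assumptions on the chords at the ends of $P$ nor the minimality of $\deg(v_0)+\deg(v_\ell)$.
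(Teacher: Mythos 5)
Your proof is correct, and its second half takes a genuinely different route from the paper's. Both arguments begin identically: the definition of crossing chords forces $0<\ell-3<3<\ell$, hence $\ell=5$ and the chords are $v_0v_3$ and $v_2v_5$. From there the paper shows $\Ll=\emptyset$ by exhibiting, for each $i\in\{1,2,3,4\}$ and each $w\in L_i$, an explicit $Y$ subgraph built from the two $3$-chords and edges of $P$ (e.g.\ for $w\in L_1$ the paths $v_2v_1w$, $v_2v_3v_0$, $v_2v_5v_4$), in the same $Y$-hunting style as the surrounding lemmas. You instead observe that the two chords together with $v_0v_1$, $v_1v_2$, $v_4v_5$, $v_3v_4$ form a Hamiltonian cycle on $V(P)$, so any vertex of $\bigcup_{i=1}^{4}L_i$ would extend a Hamiltonian path of that cycle to a path of length $6$, contradicting the maximality of $P$ --- exactly the device of \cref{clm:a1} in \cref{lem:lengthl4}, as you note. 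Your version handles all four cases uniformly and uses only the longest-path hypothesis rather than $Y$-freeness; the paper's version is self-contained case analysis consistent with the rest of the section. You are also right that neither argument needs the degree-minimality of $\deg(v_0)+\deg(v_\ell)$ or the other section-wide chord assumptions. Both proofs are complete and correct.
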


\begin{proof}
Suppose that $v_0 v_3$ and $v_{\ell-3} v_\ell$ cross.  This does not happen if $\ell \ge 6$, so $\ell = 5$ and the second chord is $v_2 v_5$.
Then $L_2=L_3=\emptyset$ by \cref{lem:endchord}.
If there is $w \in L_1$ then the paths $v_2 v_1 w$, $v_2 v_3 v_0$, $v_2 v_5 v_4$ form a $Y$.
The case $w \in L_4$ gives a $Y$ subgraph by symmetric arguments.
Hence $\Ll = \emptyset$ and \crefWithTheorem{smallG} holds.
\end{proof}

\begin{lemma}\label{lem:2chord3chord}
If a $2$-chord of $P$ crosses a $3$-chord $v_0v_3$ or $v_{\ell-3} v_\ell$ of $P$, then $\ell=5$ and $G$ satisfies \crefWithTheorem{smallG}.
\end{lemma}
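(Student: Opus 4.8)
The plan is to reduce to a single configuration and then to split on the value of $\ell$. Reversing $P$ interchanges the two kinds of $3$-chord and preserves every standing assumption of this section, so I may assume the $3$-chord in question is $v_0 v_3$. Unwinding the definition of crossing, a $2$-chord $v_p v_{p+2}$ crosses $v_0 v_3$ only when $0 < p < 3 < p+2$, which forces $p = 2$; hence the crossing $2$-chord must be $v_2 v_4$, and from now on I assume $v_0 v_3, v_2 v_4 \in E(G)$. The goal then is to show $\ell = 5$ and that $G$ satisfies \crefWithTheorem{smallG}.

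First I would bound $\ell$. If $\ell \ge 6$ then $v_6$ exists, and the paths $v_4 v_5 v_6$, $v_4 v_3 v_0$ (using the chord $v_0 v_3$), and $v_4 v_2 v_1$ (using the chord $v_2 v_4$) are three internally disjoint legs at $v_4$, producing a $Y$ subgraph, which is impossible. Since $\ell \ge 5$, this forces $\ell = 5$.

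Next, for $\ell = 5$, I would empty out $L_1$, $L_2$, and $L_3$, each by a single $Y$ subgraph. If $x \in L_2$ then $v_3 v_2 x$, $v_3 v_4 v_5$, $v_3 v_0 v_1$ form a $Y$ at $v_3$; if $x \in L_3$ then $v_2 v_3 x$, $v_2 v_4 v_5$, $v_2 v_1 v_0$ form a $Y$ at $v_2$; and if $w \in L_1$ then $v_2 v_1 w$, $v_2 v_3 v_0$, $v_2 v_4 v_5$ form a $Y$ at $v_2$. In each case the chord $v_2 v_4$, and sometimes $v_0 v_3$, supplies the edge that keeps the three legs disjoint. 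Hence $L_1 = L_2 = L_3 = \emptyset$, so $V(G) \setminus V(P) = L_4$.

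Finally I would analyze $L_4$. If $L_4 = \emptyset$ then $G = G[V(P)]$ has six vertices and \crefWithTheorem{smallG} holds. Otherwise fix $y \in L_4$. By \cref{lem:pos3pchords} and the hypotheses of this section, the only chords that could be incident with $v_5$ are $v_3 v_5$ and $v_2 v_5$; but $v_0 v_1 v_2 v_3 v_5 v_4 y$ (using $v_3 v_5$) and $v_1 v_0 v_3 v_2 v_5 v_4 y$ (using $v_0 v_3$ and $v_2 v_5$) would be paths of length $6$, longer than $P$, which is impossible. Hence $N(v_5) = \{v_4\}$, so $v_5$ is a leaf; and since $L_1 = L_2 = \emptyset$, \cref{lem:nonemptykikj,lem:edgesinK} force every vertex of $L_4$ to be a leaf with neighborhood $\{v_4\}$. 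Thus $\{v_5\} \cup L_4$ is a leaf class, $G$ is obtained from the six-vertex graph $G[V(P)]$ by cloning the leaf $v_5$, and \crefWithTheorem{smallG} holds. I expect this last step to be the main obstacle: the configurations with $L_4 \ne \emptyset$ need not contain any $Y$, so they cannot be dismissed like $L_1$, $L_2$, $L_3$; instead one must observe that a chord at $v_5$ creates a path longer than $P$, and identifying the right witnessing path is the delicate point.
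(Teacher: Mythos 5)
Your proof is correct and follows essentially the same route as the paper: reduce by symmetry to the chords $v_0v_3$ and $v_2v_4$, force $\ell=5$ via a $Y$ at $v_4$, empty $L_1,L_2,L_3$ with explicit $Y$ subgraphs, and then show any remaining vertices in $L_4$ together with $v_5$ form a leaf clone class. The only (harmless) variation is at the last step, where you rule out the chords $v_2v_5$ and $v_3v_5$ by exhibiting a path longer than $P$, whereas the paper exhibits $Y$ subgraphs centered at $v_3$ and $v_2$ respectively; both arguments are valid.
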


\begin{proof}
Suppose the $3$-chord is $v_0v_3$; the argument for $v_{\ell-3} v_\ell$ is symmetric.
Then the $2$-chord is $v_2v_4$.
If $\ell\ge 6$, then $G$ has a $Y$ centered at $v_4$, so we may assume that $\ell = 5$.
Then $v_0v_5, v_1v_5 \notin E(G)$ by the assumptions of this section.  If $v_2 v_5 \in E(G)$, then \crefWithTheorem{smallG} holds by \cref{lem:3chordcross}, so we may assume that $v_2 v_5 \notin E(G)$.
\cref{lem:endchord,lem:2chordpendant} imply that $L_2=L_3=\emptyset$.
If $w \in L_1$, then $v_2 v_1 w$, $v_2 v_3 v_0$, and $v_2 v_4 v_5$ form a $Y$ subgraph, so $L_1 = \emptyset$.
If $L_4 = \emptyset$, then \crefWithTheorem{smallG} applies.  If $L_4 \ne \emptyset$ then $v_3 v_5 \notin E(G)$ by \cref{lem:endchord}, so all elements of $L_4$ are clones of the leaf $v_5$, and \crefWithTheorem{smallG} also applies.
\end{proof}

A $3$-chord and a vee do not cross.

\begin{lemma}\label{lem:3chordcrossvee}
A $3$-chord $v_0 v_3$ or $v_{\ell-3} v_\ell$ of $P$ does not cross a vee of $P$. 
\end{lemma}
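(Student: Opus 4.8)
The plan is to pin down exactly which vee could possibly cross the 3-chord, and then exhibit a $Y$ centered at an endpoint of that 3-chord. By the symmetry between the two ends of $P$, it suffices to treat the 3-chord $v_0 v_3$; the argument for $v_{\ell-3} v_\ell$ is the mirror image. The first step is to use \cref{lem:k0klempty}: since $L_0 = L_\ell = \emptyset$, any vee $v_i w v_{i+2}$ (with $w \in L_i \cap L_{i+2}$) must have $1 \le i \le \ell-3$, so its enclosed vertex $v_{i+1}$ satisfies $2 \le i+1 \le \ell-2$; in particular the enclosed vertex is never $v_0$. Hence for a vee to cross the chord $v_0 v_3$ — i.e., for its enclosed vertex to be an endpoint of $v_0 v_3$ — that enclosed vertex must be $v_3$, forcing $i = 2$. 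So the only candidate is the vee $v_2 w v_4$ for some $w \in L_2 \cap L_4$.

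\textbf{Key step.} Now I would produce the forbidden subgraph. Since $\ell \ge 5$, the vertex $v_5$ exists, and the three paths $v_3 v_2 w$, $v_3 v_4 v_5$, and $v_3 v_0 v_1$ all emanate from $v_3$. Their vertex sets are $\{v_3,v_2,w\}$, $\{v_3,v_4,v_5\}$, $\{v_3,v_0,v_1\}$, and these pairwise intersect only in $v_3$ — here one uses that $w \notin V(P)$ and that $v_0, v_1, v_2, v_4, v_5$ are distinct. Therefore these paths form a $Y$ subgraph centered at $v_3$, contradicting the hypothesis that $G$ has no $Y$ subgraph. (Alternatively, one could derive a contradiction directly from $P$ being a longest path, observing that $v_1 v_0 v_3 v_2 w v_4 v_5 \cdots v_\ell$ is a path on $\ell+2$ vertices; I would probably still present the $Y$ argument to match the style of the surrounding lemmas.) The symmetric case uses the vee $v_{\ell-4} w v_{\ell-2}$ and a $Y$ centered at $v_{\ell-3}$ with the legs $v_{\ell-3} v_{\ell-2} w$, $v_{\ell-3} v_{\ell-4} v_{\ell-5}$, $v_{\ell-3} v_\ell v_{\ell-1}$, where $v_{\ell-5}$ exists because $\ell \ge 5$.

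\textbf{Main obstacle.} I expect essentially no obstacle here; the proof is short. The one thing worth double-checking, and worth a sentence in the writeup, is that no exceptional small-$\ell$ case arises: unlike \cref{lem:2chord3chord}, whose $Y$ needed the vertex $v_6$, the $Y$ produced here only reaches out to $v_5$ (respectively $v_{\ell-5}$), which is present for every $\ell \ge 5$. That is why the conclusion is a flat impossibility with no escape clause.

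\begin{proof}
Suppose, for contradiction, that a $3$-chord of $P$ crosses a vee of $P$.  By symmetry, assume the $3$-chord is $v_0 v_3$.  By \cref{lem:k0klempty}, $L_0$ and $L_\ell$ are empty, so any vee $v_i w v_{i+2}$ of $P$ (where $w \in L_i \cap L_{i+2}$) satisfies $1 \le i \le \ell-3$, and hence its enclosed vertex $v_{i+1}$ is not $v_0$.  Since the vee crosses $v_0 v_3$, its enclosed vertex is an endpoint of $v_0 v_3$, so it must be $v_3$; thus $i = 2$ and the vee is $v_2 w v_4$ for some $w \in L_2 \cap L_4$.

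Since $\ell \ge 5$, the vertex $v_5$ exists.  Consider the paths $v_3 v_2 w$, $v_3 v_4 v_5$, and $v_3 v_0 v_1$.  As $w \notin V(P)$ and $v_0, v_1, v_2, v_4, v_5$ are distinct vertices of $P$, these three paths pairwise meet only in $v_3$, so together they form a $Y$ subgraph centered at $v_3$.  This contradicts the assumption that $G$ has no $Y$ subgraph.  Therefore no $3$-chord of $P$ crosses a vee of $P$.
\end{proof}
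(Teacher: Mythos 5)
Your proof is correct and matches the paper's argument essentially verbatim: both reduce to the vee $v_2 w v_4$ with $w \in L_2 \cap L_4$ and exhibit the $Y$ centered at $v_3$ with legs $v_3 v_0 v_1$, $v_3 v_2 w$, $v_3 v_4 v_5$. Your extra justification that the crossing vee must be $v_2 w v_4$ (via $L_0 = \emptyset$) is a correct elaboration of a step the paper leaves implicit.
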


\begin{proof}
Suppose the $3$-chord $v_0 v_3$ crosses a vee; the argument for $v_{\ell-3} v_\ell$ is symmetric.  The vee must have the form $v_2 w v_4$.  
Then the paths $v_3 v_0 v_1$, $v_3 v_2 w$, $v_3 v_4 v_5$ form a $Y$, which is a contradiction.
\end{proof}

We can now complete the case where $G$ does not have a long cycle.

\begin{proposition}\label{prop:pathstructure}
Suppose $G$ is a connected graph with no $Y$ subgraph and $P = v_0 v_1 \dots v_\ell$ is a pointiest longest path in $G$.  Suppose also that $\ell \ge 5$, $G$ has no chord from $\{v_0, v_1\}$ to $\{v_{\ell-1}, v_\ell\}$, and $L_1 \cap L_{\ell-1} = \emptyset$.
Then $G$ either satisfies \crefWithTheorem{smallG}, or $G$ is a spiked strand and satisfies \crefWithTheorem{infam}.
\end{proposition}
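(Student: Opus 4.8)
The plan is to cut the longest path $P=v_0v_1\dots v_\ell$ into consecutive \emph{blocks} of edges, show each block spans exactly one bead, and then check that the beads string together along $P$ into a spiked strand. We may assume $G$ does not satisfy \crefWithTheorem{smallG} (otherwise we are done). Then by \cref{lem:3con2chords,lem:2chord3chord,lem:3chordcross}, $P$ has no three consecutive $2$-chords, no $2$-chord crosses a $3$-chord, and the two end $3$-chords $v_0v_3$, $v_{\ell-3}v_\ell$ (when present) do not cross; by the hypotheses of this section together with \cref{lem:pos3pchords}, every chord of $P$ is a $2$-chord $v_iv_{i+2}$ or one of $v_0v_3$, $v_{\ell-3}v_\ell$; and by \cref{lem:nonemptykikj} (using $L_1\cap L_{\ell-1}=\emptyset$) every vee has endpoints $v_i,v_{i+2}$. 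Call two edges of $P$ \emph{linked} if some single chord or vee covers both; a $2$-chord or vee covers two consecutive edges, while $v_0v_3$ covers $v_0v_1,v_1v_2,v_2v_3$. Let the \emph{blocks} be the classes of the transitive closure of this relation; the edges of each block then form an interval of consecutive edges of $P$.

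The heart of the argument is the claim that every block has one, two, or three edges, and that a three-edge block is either $\{v_0v_1,v_1v_2,v_2v_3\}$ carrying the $3$-chord $v_0v_3$, the symmetric triple at the far end, or a triple $v_jv_{j+1},v_{j+1}v_{j+2},v_{j+2}v_{j+3}$ carrying a crossing pair of $2$-chords $v_jv_{j+2}$, $v_{j+1}v_{j+3}$; moreover such a three-edge block is isolated, its neighbouring edges lying in other blocks. To see this, consider three consecutive edges $f,e,g$ in one block with the links between $f,e$ and between $e,g$ not using an end $3$-chord. Then the common vertex of $f,e$ is the enclosed vertex of the first link, and the common vertex of $e,g$ the enclosed vertex of the second, so each link is a $2$-chord or vee through one of these two vertices. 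Among the four combinations, \cref{cor:crossvees,lem:chordcrossvee} rule out every one except that both links are $2$-chords — necessarily the crossing pair above — since in the other cases the enclosed vertex of some vee would be an endpoint of a chord or of another vee. A fourth edge in such a block would create three consecutive $2$-chords, contradicting \cref{lem:3con2chords}, or a $2$-chord or vee crossing one of the two $2$-chords, contradicting \cref{lem:chordcrossvee,cor:crossvees}; hence it is isolated. Finally $v_0v_3$ cannot link $v_2v_3$ to $v_3v_4$: that would need $v_2v_4\in E(G)$ (which crosses $v_0v_3$) or a vee at $v_2$, and $L_2=\emptyset$ by \cref{lem:3chordL}.

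Next I would identify the bead carried by each block, using that $\Ll$ is independent (\cref{lem:edgesinK}), that triple intersections $L_i\cap L_j\cap L_k$ with distinct indices are empty (\cref{lem:nonemptyintersect3}), and that the enclosed vertex of a $2$-chord or vee has empty $L$-set (\cref{lem:2chordpendant,lem:vwithpend,lem:2chordL}). A one-edge block $\{v_jv_{j+1}\}$ is a $K_{1,1,0}$ bead with primary vertices $v_j,v_{j+1}$. A two-edge block on $v_j,v_{j+1},v_{j+2}$ has $L_{j+1}=\emptyset$, and with the vee-vertices of $L_j\cap L_{j+2}$ as additional secondary vertices it is a $K_{1,1,t}$ bead ($t\ge1$) if $v_jv_{j+2}\in E(G)$ and a $K_{2,t}$ bead ($t\ge2$) otherwise, in either case with primary vertices $v_j,v_{j+2}$. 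A three-edge crossing-$2$-chord block is a $K_{2,1,1}$ on $v_j,\dots,v_{j+3}$ with primary vertices $v_j,v_{j+3}$. For the $3$-chord block at $v_0$, \cref{lem:3chordL} gives $L_0=L_2=\emptyset$ and $L_1\subseteq L_3$; a short case analysis on which of $v_0v_2$, $v_1v_3$ are also present shows it is a $K_{2,t}$, $K_{1,1,t}$, $K_{2,1,1}$, or $K_4$, always with $v_3$ a primary vertex (for $K_4$ we designate $v_3$ primary). The block at the far end is symmetric.

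To finish, the blocks partition $E(P)$ and lie in a line from $v_0$ to $v_\ell$: two consecutive beads share exactly the path vertex lying between their blocks, which is a primary vertex of each, and every path vertex is in at most two beads while $v_0,v_\ell$ are in one. Each vertex of $\Ll$ lies in exactly one set $L_i$ or in two consecutive sets $L_i,L_{i+2}$; in the latter case it is a secondary vertex of a bead, and in the former it is a leaf whose neighbour $v_i$ is a primary vertex in exactly two beads — any vertex interior to a bead has empty $L$-set, the only non-enclosed such vertices occurring inside a $3$-chord bead, where $L_2=\emptyset$ and either $L_1=\emptyset$ or $L_1\subseteq L_3$ — so the leaf is a legitimate spike. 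Since $V(P)$ is a vertex cover (\cref{lem:edgesinK}) this accounts for every edge of $G$; only the first or last bead can be a $K_4$, so the auxiliary bipartite graph of the beads is a path, and $G$ is a spiked strand, i.e.\ \crefWithTheorem{infam} holds. I expect the block-size bound of the second paragraph to be the main obstacle: it is where \cref{lem:3con2chords} and all the crossing lemmas must be combined, and where keeping track of which $2$-chords, $3$-chords, and vees can coexist near a given vertex is most delicate; the case analysis for the $3$-chord end blocks is also fiddly, though routine.
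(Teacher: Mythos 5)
Your proposal is correct and follows essentially the same route as the paper's proof: both partition the edges of $P$ into short subpaths according to which $3$-chords, crossing $2$-chords, single $2$-chords, or vees cover them, identify each subpath (together with its covering chords, vees, and the relevant sets $L_i$) as a bead, and verify that pendant edges become spikes at primary vertices shared by two beads. The only cosmetic difference is that you derive this partition as the classes of a ``linked'' relation and prove the block-size bound explicitly, whereas the paper lists the five subpath types directly and lets the crossing lemmas justify the partition implicitly.
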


\begin{proof}
By the assumptions of this theorem and \cref{lem:domset,lem:k0klempty,lem:intersectKiKi1,lem:nonemptykikj,lem:nonemptyintersect3}, every edge of $G$ is one of the following: an edge of $P$, a chord of $P$, a pendant edge incident with $v_i$ where $1 \le i \le \ell-1$, or an edge of a unique vee $v_i w v_j$ with $1 \le i < j \le \ell-1$.  By \cref{lem:pos3pchords}, every chord is a $2$-chord or a $3$-chord (specifically $v_0 v_3$ or $v_{\ell-3} v_\ell$).

If $G$ has order $6$, then $G$ satisfies \crefWithTheorem{smallG}.  If $G$ has three consecutive $2$-chords, or a $3$-chord that crosses a $2$-chord or another $3$-chord, then by \cref{lem:3con2chords,lem:2chord3chord,lem:3chordcross}, $G$ also satisfies \crefWithTheorem{smallG}.
We may therefore assume that $G$ has order at least $7$, no three consecutive $2$-chords, and no $3$-chord that crosses a $2$-chord or another $3$-chord.
By \cref{lem:3chordcrossvee}, no $3$-chord crosses a vee.
By \cref{lem:vwithpend}, ($\ast$) all neighbors of vertices enclosed by vees lie on $P$, which includes the fact that vees do not cross.
By \cref{lem:2chordpendant}, ($\ast\ast$) all neighbors of vertices enclosed by $2$-chords lie on $P$, which includes the fact that a vee does not cross a $2$-chord.

Facts ($\ast$) and ($\ast\ast$) will be used later in the proof, and will be replaced by similar results when we use the same argument as part of the proof of \cref{prop:cyclestructure}.  These facts imply that two vees that cover the same edge, or a vee and a $2$-chord that cover the same edge, have the same endpoints.

Therefore, we can partition the edges of $P$ into subpaths of length at most three as follows.  For each $3$-chord we take (a) the triple of edges covered by the $3$-chord.  Some of these edges may be covered by vees or other $2$-chords, but there are no vees or other chords that cross the $3$-chord.  Edges of $P$ not covered by a $3$-chord can be divided into (b) triples of edges covered by either of a pair of crossing (equivalently, consecutive) $2$-chords, (c) pairs of edges covered by a $2$-chord and possibly also by vees, (d) pairs of edges covered only by one or more vees, and (e) single other edges (not covered by any chords or vees).  
We show that all other non-pendant edges of $G$ combine with one of these subpaths to form a bead where the ends of the subpath are the primary vertices of the bead, except for subpaths of type (a).  Moreover, the pendant edges not in $P$ form spikes, and so we have a spiked strand.

For subpaths $v_i v_{i+1} v_{i+2} v_{i+3}$ of type (b), ($\ast\ast$) implies that $L_{i+1} = L_{i+2} = \emptyset$.
Moreover, no additional chords cross $v_i v_{i+2}$ or $v_{i+1} v_{i+3}$, so $N(v_{i+1}) = \{v_i, v_{i+2}, v_{i+3}\}$ and $N(v_{i+2}) = \{v_i, v_{i+1}, v_{i+3}\}$.
The subpath $v_i v_{i+1} v_{i+2} v_{i+3}$ and the two $2$-chords form a $K_{2,1,1}$ bead with primary vertices $v_i$ and $v_{i+3}$.

For subpaths $v_i v_{i+1} v_{i+2}$ of type (c), $L_{i+1} = \emptyset$ by ($\ast\ast$).  There are also no chords crossing $v_i v_{i+2}$, so $N(v_{i+1}) = \{v_i, v_{i+2}\}$.
The subpath, the $2$-chord, and any vees that also cover the subpath form a $K_{1,1,t}$ bead where $t=|L_i\cap L_{i+2}|+1 \ge 1$, and $v_i$ and $v_{i+2}$ are primary vertices.

For subpaths $v_i v_{i+1} v_{i+2}$ of type (d), $N(v_{i+1}) = \{v_i, v_{i+2}\}$ by ($\ast$).  This subpath and the vees that cover it form a $K_{2,t}$ bead where $t=|L_i\cap L_{i+2}|+1 \ge 2$, with primary vertices $v_i$ and $v_{i+2}$.

Subpaths $v_i v_{i+1}$ of type (e) form a $K_{1,1,0}$ bead by themselves, with primary vertices $v_i$ and $v_{i+1}$.

The most complicated case is subpaths of type (a), which must be $v_0 v_1 v_2 v_3$ or $v_{\ell-3} v_{\ell-2} v_{\ell-1} v_\ell$.  The situations are symmetric, so we consider only the first, which is covered by the chord $v_0 v_3$.  By \cref{lem:k0klempty,lem:endchord}, $L_0 = L_2 = \emptyset$ and $L_1$ contains no leaves, so $L_1 \subseteq L_3$.  Consider the subgraph $H$ induced by $\{v_0, v_1, v_2, v_3\}$, which contains the $4$-cycle $C = (v_0 v_1 v_2 v_3)$.  If $H = C$ then $C$ and all vees of the form $v_1 w v_3$ form a $K_{2,t}$ bead with $t = |L_1 \cap L_3| + 2 \ge 2$ and primary vertices $v_1$ and $v_3$.  If $H = C \cup \{v_1 v_3\}$ then we similarly have a $K_{1,1,t}$ bead with primary vertices $v_1$ and $v_3$.   Suppose now that $v_0 v_2 \in E(H)$.  If there is $w \in L_1 = L_1 \cap L_3$ then the paths $v_3 w v_1$, $v_3 v_2 v_0$, $v_3 v_4 v_5$ form a $Y$ subgraph, so we must have $L_1 = \emptyset$.  If $H = C \cup \{v_0 v_2\}$ then $H$ is a $K_{2,1,1}$ bead with primary vertices $v_1$ and $v_3$, and if $H = C \cup \{v_0 v_2, v_1 v_3\}$ then $H$ is a $K_4$ bead with primary vertex $v_3$.  In all situations, since we have accounted for all edges from $v_i$ to $L_i$ for $0 \le i \le 2$, and there are no chords crossing $v_0 v_3$, the bead is joined to the rest of $G$ only at the primary vertex $v_3$.

The above analysis accounts for all edges of $P$ and non-pendant edges not in $P$.  It shows moreover that any pendant edges not in $P$ must be adjacent to primary vertices; such primary vertices must belong to two beads or we could find a path longer than $P$.  Therefore, we have a spiked strand.
\end{proof}
 
\section{Long cycle}\label{Sec:CycleCase}
We now consider the case that $P$ has a chord, or an intersection between $L_1$ and $L_{\ell-1}$, that creates a long cycle.  As in \cref{Sec:PathCase}, we assume that $P$ is a pointiest longest path $v_0 v_1 \dots v_{\ell}$ with $\ell \ge 5$.  But now we assume that either $P$ has a chord from $v_0$ or $v_1$ to $v_{\ell-1}$ or $v_\ell$, or $L_1 \cap L_{\ell-1} \ne \emptyset$.  
We first show that there is a long cycle that is edge-dominating.

\begin{lemma}\label{lem:edgedomcycle}
The graph $G$ has an edge-dominating cycle of length at least $\ell-1$, and hence at least $4$.
\end{lemma}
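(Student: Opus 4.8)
The plan is to build the required cycle by ``closing off'' the sub-path $v_1 v_2 \dots v_{\ell-1}$ of $P$, and to verify edge-domination using \cref{lem:edgesinK}, which tells us that $V(P)$ is a vertex cover of $G$. First I would record the following consequence of \cref{lem:k0klempty,lem:pos3pchords}: \emph{every edge of $G$, with the single possible exception of $v_0 v_\ell$, is incident with a vertex of $\{v_1, \dots, v_{\ell-1}\}$}. To see this, take any edge $e$; by \cref{lem:edgesinK} it has an endpoint $v_j$ on $P$, and if $j \notin \{0,\ell\}$ we are done. If $v_j = v_0$, then the other endpoint of $e$ is a neighbour of $v_0$; since $L_0 = \emptyset$ by \cref{lem:k0klempty}, this neighbour is either $v_1$ or a chord-neighbour of $v_0$, hence by \cref{lem:pos3pchords} it lies in $\{v_1,v_2,v_3,v_{\ell-1},v_\ell\}$. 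As $\ell \geq 5$, every element of this set except $v_\ell$ belongs to $\{v_1,\dots,v_{\ell-1}\}$, so either $e$ is incident with that set or $e = v_0 v_\ell$. The case $v_j = v_\ell$ is symmetric (the relevant neighbours are $v_{\ell-1},v_{\ell-2},v_{\ell-3},v_1,v_0$).

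With this in hand I would split into two cases. If $v_0 v_\ell \in E(G)$, then $C = v_0 v_1 \dots v_\ell v_0$ is a cycle with $V(C) = V(P)$; since $V(P)$ is a vertex cover, $C$ is edge-dominating, and its length is $\ell+1 \geq \ell - 1$. If $v_0 v_\ell \notin E(G)$, then by the previous paragraph \emph{every} edge of $G$ is incident with $\{v_1,\dots,v_{\ell-1}\}$, so it suffices to produce a cycle $C$ with $V(C) \supseteq \{v_1,\dots,v_{\ell-1}\}$: any such $C$ is automatically edge-dominating and has length $|V(C)| \geq \ell - 1 \geq 4$.

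To produce $C$ in the second case I would invoke the standing hypothesis of this section: since $v_0 v_\ell$ is excluded, at least one of $v_0 v_{\ell-1}$, $v_1 v_\ell$, $v_1 v_{\ell-1}$ is an edge, or $L_1 \cap L_{\ell-1} \neq \emptyset$. I close the path $v_1 v_2 \dots v_{\ell-1}$ into a cycle accordingly: through $v_0$ using the edges $v_0 v_{\ell-1}$ and $v_0 v_1$; through $v_\ell$ using $v_{\ell-1} v_\ell$ and $v_1 v_\ell$; directly using $v_1 v_{\ell-1}$; or through a vertex $w \in L_1 \cap L_{\ell-1}$ using $v_{\ell-1} w$ and $w v_1$. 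In every case $V(C) \supseteq \{v_1,\dots,v_{\ell-1}\}$, as required. The only points needing care — routine given $\ell \geq 5$ — are that the vertex or edges used to close the path are genuinely new (the vertices $v_0$, $v_\ell$, and any $w \notin V(P)$ are all distinct from $v_1,\dots,v_{\ell-1}$, so $C$ really is a cycle) and that the chord-endpoint bookkeeping in the first paragraph really does land in $\{v_1,\dots,v_{\ell-1}\}$; I do not anticipate any substantive obstacle.
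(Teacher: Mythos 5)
Your proof is correct and follows essentially the same route as the paper: close the subpath $v_1 v_2 \dots v_{\ell-1}$ into a cycle using the hypothesized chord or a vertex of $L_1 \cap L_{\ell-1}$, and observe that any such cycle is edge-dominating. Your first paragraph is in fact slightly more careful than the paper, which simply asserts that $v_1 \dots v_{\ell-1}$ is an edge-dominating path (a claim that needs the small caveat you supply about the possible edge $v_0 v_\ell$, handled in your case analysis by the fact that the resulting cycle then contains $v_0$ and $v_\ell$ as well).
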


\begin{proof}
By \cref{lem:domset}, $P$ is an edge-dominating path in $G$, so if $v_0 v_\ell \in E(G)$, then $(v_0 v_1 \dots v_\ell)$ is an edge-dominating cycle of length $\ell+1$.  If $v_0 v_\ell \notin E(G)$, then
$v_1 v_2 \dots v_{\ell-1}$ is an edge-dominating path in $G$, by \cref{lem:domset,lem:k0klempty}.  Therefore, any cycle that contains this path is an edge-dominating cycle of length at least $\ell-1$.  If we have a chord $v_i v_j$ with $(i,j) = (0,\ell-1)$, $(1, \ell-1)$, or $(1, \ell)$, then $(v_i v_{i+1} \dots v_j)$ is such a cycle.  If we have $w \in L_1 \cap L_{\ell-1}$, then $(w v_1 v_2 \dots v_{\ell-1})$ is such a cycle.
\end{proof}

Our arguments in the rest of this section rely only on the hypothesis that $G$ is a connected graph with no subgraph isomorphic to $Y$ and with an edge-dominating cycle of length at least $4$.
We choose $C = (u_0 u_1 u_2 \dots u_{k-1})$ to be an edge-dominating cycle in $G$ of maximum length $k \ge 4$.  The subscript $i$ of $u_i$ is considered to belong to $\mZ_k = \{0, 1, 2, \dots, k-1\}$ with operations modulo $k$.  For $i, j \in \mZ_k$ we let $d(i,j)$ denote the cyclic distance between $i$ and $j$, i.e., if $i \le j$ then $d(i,j) = \min(j-i, k+i-j)$ and if $i > j$ then $d(i,j) = d(j,i)$.
Let $N_i=N_G(u_i)-V(C)$ and $\N = \bigcup_{i=0}^{k-1} N_i$.

We prove a series of lemmas similar to those in \cref{Sec:PathCycle,Sec:PathCase}.  The arguments here are sometimes simpler since we do not have to worry about ``end effects''.  Since the arguments are very similar we omit some details.
We focus on the relationships between the sets $N_i$ and on the existence of particular chords of the cycle  $C$.  Henceforth (for the rest of this section) an \emph{$i$-chord} is an edge $u_j u_m$ with $d(j,m)=i$, and a \emph{vee} is a path $u_j w u_m$ where $d(j, m) = 2$ and $w \in N_j \cap N_m$.  We say that a chord $u_j u_m$ \emph{crosses} another chord $u_p u_q$ if $j, p, m, q$ are distinct and occur along $C$ in either that cyclic order or its reverse.  We similarly define what it means for two vees to cross, or for a vee to cross a chord.

Because $C$ is edge-dominating we have the following.

\begin{observation}\label{obs:domcycle}
We have $V(G)-V(C) = \mathcal{N}$, and $\mathcal{N}$ is an independent set in $G$.
\end{observation}

Next we consider intersections between the sets $N_i$.
\begin{lemma}\label{lem:nini1empty}
For $i\in\mZ_k$,  $N_i\cap N_{i+1}=\emptyset$.
\end{lemma}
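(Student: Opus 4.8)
This is the cycle analogue of \cref{lem:intersectKiKi1}, and since $C$ was chosen to be an edge-dominating cycle of \emph{maximum} length, the natural approach is to suppose $N_i\cap N_{i+1}\ne\emptyset$ and build a longer edge-dominating cycle, contradicting maximality of $k$. So I would start by assuming there is some $i\in\mZ_k$ and a vertex $w\in N_i\cap N_{i+1}$; by definition of $N_i$ (neighbors of $u_i$ off $C$), we have $w\notin V(C)$, and $u_i\ne u_{i+1}$ since $k\ge 4$.

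Next I would form the cycle obtained from $C$ by subdividing the edge $u_iu_{i+1}$ through $w$, namely $C' = (u_0 u_1 \dots u_i\, w\, u_{i+1}\dots u_{k-1})$. This is a genuine (simple) cycle because $w$ is not already a vertex of $C$, and it has length $k+1$. The key observation is that $C'$ is still edge-dominating: $V(C)\subseteq V(C')$, and every edge of $G$ is incident with a vertex of $C$ since $C$ is edge-dominating, hence every edge of $G$ is incident with a vertex of $C'$.

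Thus $C'$ is an edge-dominating cycle of length $k+1 > k$, contradicting the choice of $C$ as a maximum-length edge-dominating cycle. Therefore $N_i\cap N_{i+1}=\emptyset$ for all $i\in\mZ_k$.

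\textbf{Main obstacle.} There is essentially no difficulty here; the only things to be careful about are that $w$ genuinely lies off $C$ (so that $C'$ is simple and strictly longer) and that edge-domination is preserved under adding a vertex to the cycle — both are immediate. The argument is deliberately short, matching the remark in the text that the cyclic case avoids the ``end effects'' that complicated the path version.
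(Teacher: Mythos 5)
Your proof is correct and follows exactly the paper's argument: insert $w$ into $C$ by replacing the edge $u_iu_{i+1}$ with the path $u_i w u_{i+1}$, note the resulting cycle is still edge-dominating, and contradict the maximality of $k$. The only difference is that you spell out the routine checks (that $w\notin V(C)$ and that edge-domination is preserved) which the paper leaves implicit.
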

\begin{proof}
If $w \in N_i\cap N_{i+1}$, then replacing $u_i u_{i+1}$ by $u_i w u_{i+1}$ gives an edge-dominating cycle longer than $C$.  Thus, $N_i\cap N_{i+1}=\emptyset$, as required.
\end{proof}

\begin{lemma}\label{lem:intersect2Ni}
If $N_i\cap N_j\ne \emptyset$ where $i \ne j$ then $d(i,j) = 2$.
\end{lemma}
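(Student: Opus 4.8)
The plan is to imitate the proof of \cref{lem:nonemptykikj}, which is actually cleaner here because the cyclic setting has no endpoints to produce exceptional cases. First, \cref{lem:nini1empty} already rules out $d(i,j)=1$, so it suffices to show that $d(i,j)\ge 3$ is impossible; the conclusion $d(i,j)=2$ then follows.

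So I would suppose, for contradiction, that $d(i,j)\ge 3$ and fix $w\in N_i\cap N_j$. The key point is that the seven vertices $u_i$, $u_{i+1}$, $u_{i+2}$, $u_{i-1}$, $u_{i-2}$, $u_j$, $w$ are pairwise distinct. The five cycle vertices $u_i, u_{i\pm 1}, u_{i\pm 2}$ are distinct because $k\ge 5$ (among the indices $i, i\pm 1, i\pm 2$ the only possible coincidence is $i+2\equiv i-2$, which requires $k\mid 4$); the vertex $u_j$ differs from all of them since $d(i,j)\ge 3$ forces $j\notin\{i,i\pm1,i\pm2\}$; and $w\notin V(C)$ by the definition of $N_i$, so $w$ is distinct from every $u_m$. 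Given that these vertices are distinct, the three paths $u_i u_{i+1} u_{i+2}$ and $u_i u_{i-1} u_{i-2}$ (whose edges all lie on $C$) together with $u_i w u_j$ (whose two edges exist because $w\in N_i\cap N_j$) pairwise meet only at $u_i$, hence form a subgraph isomorphic to $Y$ centered at $u_i$. This contradicts the standing hypothesis that $G$ has no $Y$ subgraph, so $d(i,j)=2$.

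I do not expect any real obstacle: the whole argument is a one-shot $Y$-construction plus routine index bookkeeping. The only thing to be careful about is the distinctness of the five arc-vertices around $u_i$, and the hypothesis $k\ge 5$ (in force throughout the rest of this section) is exactly what guarantees it; the absence of path endpoints means that, unlike in \cref{lem:nonemptykikj}, there is no surviving exceptional case of the form $i=1$, $j=\ell-1$.
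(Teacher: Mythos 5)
Your proof is correct and follows essentially the same route as the paper: rule out $d(i,j)=1$ via \cref{lem:nini1empty}, then obtain a $Y$ centered at $u_i$ from the arms $u_iu_{i+1}u_{i+2}$, $u_iu_{i-1}u_{i-2}$, and $u_iwu_j$ when $d(i,j)\ge 3$. The only cosmetic difference is that the paper dispatches $k=5$ separately (where $d(i,j)\le 2$ automatically), whereas you fold it in by noting the $d(i,j)\ge 3$ case is then vacuous; your distinctness bookkeeping is sound.
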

\begin{proof}
Suppose that $N_i \cap N_j \ne \emptyset$.
By \cref{lem:nini1empty}, $d(i,j) \ge 2$ and hence $k \ge 4$.  If $k \le 5$ we have $d(i,j) = 2$ as required.
We may therefore assume that $k\ge 6$.
If $d(i,j) \ge 3$ and $w \in N_i \cap N_j$, then the paths $u_i w u_j$, $u_i u_{i-1} u_{i-2}$, $u_i u_{i+1} u_{i+2}$ form a $Y$ subgraph, a contradiction.
The conclusion follows.
\end{proof}

Each edge in $E(G)-E(C)$ belongs to at most one vee.

\begin{lemma}\label{lem:intersect3Ni}
If $i$, $j$, and $m$ are distinct, then $N_i\cap N_j\cap N_m=\emptyset$.
\end{lemma}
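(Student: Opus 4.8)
The plan is to mirror the argument of \cref{lem:nonemptyintersect3} from the longest-path setting, now using \cref{lem:intersect2Ni} as the cyclic analogue of \cref{lem:nonemptykikj}. Suppose toward a contradiction that $N_i \cap N_j \cap N_m \ne \emptyset$ for distinct indices $i, j, m \in \mZ_k$, and fix a vertex $w$ in this triple intersection. By \cref{lem:intersect2Ni}, each of the three pairs $\{i,j\}$, $\{j,m\}$, $\{i,m\}$ must be at cyclic distance exactly $2$, so the whole argument reduces to understanding which triples of indices can be pairwise at distance $2$ on $C$.

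The first step is to pin down those triples. After cyclically shifting indices so that $i = 0$, both $j$ and $m$ lie in $\{2, k-2\}$, and these two values are distinct because $k \ge 5$; hence $\{j,m\} = \{2, k-2\}$ and $d(j,m) = \min(k-4, 4)$. This equals $2$ only when $k = 6$, in which case, up to the shift, $\{i,j,m\} = \{0, 2, 4\}$. So either there is nothing to prove (no such triple exists) or we are in the single exceptional configuration $k = 6$, $\{i,j,m\} = \{0,2,4\}$.

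The second step disposes of that exceptional configuration directly: the three paths $w u_0 u_1$, $w u_2 u_3$, $w u_4 u_5$ pairwise meet only at $w$, and together they use the seven distinct vertices $w, u_0, u_1, u_2, u_3, u_4, u_5$, so they form a $Y$ subgraph centered at $w$, contradicting that $G$ has no subgraph isomorphic to $Y$. Therefore $N_i \cap N_j \cap N_m = \emptyset$.

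I do not anticipate any genuine obstacle here; the one mild subtlety, absent from the path version, is that on a short cycle three indices at pairwise cyclic distance $2$ can genuinely coexist (precisely when $k=6$), so one cannot simply invoke \cref{lem:intersect2Ni} to declare impossibility and must spend a line exhibiting the $Y$ in that edge case. Since the arguments here are deliberately terse in the style announced at the start of the section, this one-line case analysis is all that needs to be written.
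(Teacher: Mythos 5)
Your proof is correct and follows the paper's argument exactly: invoke \cref{lem:intersect2Ni} to force all three pairwise cyclic distances to be $2$, observe this is only possible when $k=6$, and then exhibit the $Y$ centered at $w$. The only difference is that you spell out the distance computation and the explicit arms of the $Y$, which the paper leaves implicit.
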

\begin{proof}
Suppose there is $w \in N_i \cap N_j \cap N_m$.  By \cref{lem:intersect2Ni}, $d(i,j)=d(j,m)=d(i,m) = 2$.  This can only happen when $k=6$, and then there is a $Y$ subgraph centered at $w$, which is a contradiction.
\end{proof}

\begin{lemma}\label{lem:cycleveecross}
A vee does not cross a $2$-chord or another vee.
\end{lemma}

\begin{proof}
If either crossing occurred we would have an edge-dominating cycle longer than $C$, which is a contradiction.
\end{proof}

We deal with the smallest value of $k$ separately. 

\begin{lemma}\label{lem:cyclek4}
If $k=4$, then $G$ satisfies \crefWithTheorem{smallG}, or $G$ is a spiked necklace and satisfies \crefWithTheorem{infam}.
\end{lemma}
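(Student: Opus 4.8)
The plan is to continue the structural analysis of this section, with the cycle $C$ playing the role that the path $P$ played in \cref{prop:pathstructure}. Write $C = (u_0 u_1 u_2 u_3)$. By \cref{obs:domcycle} the set $\N$ is independent, and since $C$ is edge-dominating every vertex not on $C$ has all of its neighbours on $C$; hence $V(G) = V(C) \cup \N$. By \cref{lem:nini1empty} the only pairs $N_i, N_j$ that can intersect are $\{N_0, N_2\}$ and $\{N_1, N_3\}$, and the only possible chords of $C$ are the two diagonals $u_0 u_2$ and $u_1 u_3$. I would first record two consequences of $C$ having \emph{maximum} length. First, $N_0 \cap N_2$ and $N_1 \cap N_3$ cannot both be nonempty: picking $w$ in the first and $w'$ in the second, $(u_0 u_1 w' u_3 u_2 w)$ is a $6$-cycle through $V(C)$, hence an edge-dominating cycle longer than $C$. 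Second, if some $w \in N_0 \cap N_2$ then the diagonal $u_1 u_3$ is not a chord, since otherwise $(u_0 w u_2 u_1 u_3)$ is an edge-dominating $5$-cycle (and symmetrically with the diagonals interchanged). So, up to the symmetry of $C$, I may assume $N_1 \cap N_3 = \emptyset$; then every vertex of $N_1$ (resp.\ $N_3$) is a leaf adjacent to $u_1$ (resp.\ $u_3$), and, setting $S = N_0 \cap N_2$, every vertex of $S$ has degree exactly two with neighbours $u_0$ and $u_2$, while $N_0 \setminus S$ and $N_2 \setminus S$ consist of leaves at $u_0$ and $u_2$ respectively.

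Next I would set up the relevant $Y$-arguments. The main one: if $N_1 \ne \emptyset$ and $N_3 \ne \emptyset$, pick $\ell_1 \in N_1$ and $\ell_3 \in N_3$; if moreover there is a path of length two from $u_0$ to $u_2$ avoiding $u_1$ and $u_3$ — for instance through a vertex of $S$, or along the diagonal $u_0 u_2$ (if it is a chord) and then to a leaf at $u_2$ — then $u_0 u_1 \ell_1$, $u_0 u_3 \ell_3$ and that path are the three legs of a $Y$ centered at $u_0$. There is a companion argument for the case that both diagonals are chords: then $\{u_0, u_1, u_2, u_3\}$ induces $K_4$, and if three of $N_0, N_1, N_2, N_3$ were nonempty the remaining vertex of $C$ would be the center of a $Y$ with legs to three leaves. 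Using these together with the maximality consequences above, a finite case check shows that in every configuration in which $N_1$ and $N_3$ are both nonempty, or in which both diagonals are chords, one has $S = \emptyset$ and $\{u_0, u_1, u_2, u_3\}$ induces $C$, $K_{2,1,1}$, or $K_4$; a direct inspection of these configurations (with their pendant edges) places $G$ either in \crefWithTheorem{smallG} — e.g.\ $K_{2,1,1}$ or $K_4$ with pendant edges at up to two vertices — or in \crefWithTheorem{infam}, as a necklace of four $K_{1,1,0}$ beads with spikes. After this I may assume that at most one diagonal is a chord and, up to symmetry, that $N_3 = \emptyset$, so $u_3$ has neighbours exactly $u_0$ and $u_2$.

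In the remaining cases I would read off the bead decomposition exactly as in the strand case: $u_0 u_1$ and $u_1 u_2$ are $K_{1,1,0}$ beads with primary pairs $\{u_0,u_1\}$ and $\{u_1,u_2\}$, and $\{u_0, u_2\} \cup S \cup \{u_3\}$ forms a bead with primary vertices $u_0$ and $u_2$ — a $K_{1,1,|S|+1}$ if $u_0 u_2$ is a chord and a $K_{2,|S|+1}$ otherwise, with the degenerate possibility $K_{2,1}$ replaced by the two $K_{1,1,0}$ beads $u_0 u_3$ and $u_2 u_3$. Stringing these beads cyclically at $u_0$, $u_1$, $u_2$ yields a necklace in which each of $u_0, u_1, u_2$ lies in exactly two beads, so the leaves remaining at $u_0, u_1, u_2$ are legitimate spikes, all edges of $G$ are accounted for, and $G$ is a spiked necklace; hence \crefWithTheorem{infam} holds.

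The step I expect to be the main obstacle is the finite case check in the second paragraph, and within the assembly the verification that every leaf of $G$ is attached to a vertex that is primary in two beads, rather than to a secondary vertex of the large bead $\{u_0,u_2\}\cup S\cup\{u_3\}$; this is exactly where the reduction $N_1 \cap N_3 = \emptyset$, the exclusion of the diagonal $u_1 u_3$ as a chord when $S \ne \emptyset$, and the $Y$-argument forcing $N_3 = \emptyset$ get used. A minor wrinkle is that the necklaces produced have only three or four beads, so some of them could equally be regarded as spiked strands; this matches the ambiguity the paper explicitly declines to resolve, and is harmless since any spiked necklace belongs to $\B$.
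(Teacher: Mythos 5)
Your proposal is correct and follows essentially the same route as the paper: use maximality of the edge-dominating cycle to rule out coexisting vees and crossing diagonals, use $Y$-freeness to restrict how pendant structure and vees can coexist on opposite sides of $C$, and assemble the surviving configurations either into a three- or four-bead spiked necklace or into a graph satisfying \crefWithTheorem{smallG}. The one point to tighten is in your final case: from ``$N_3=\emptyset$'' you conclude that $u_3$ has neighbours exactly $u_0$ and $u_2$, but that also requires $u_1u_3\notin E(G)$, and if the unique diagonal is $u_1u_3$ (for instance $C$ plus the chord $u_1u_3$ plus leaves at both $u_0$ and $u_2$) your bead decomposition as written leaves that chord unaccounted for; such a configuration has to be rerouted, after rotating the labels by one, into your second-paragraph case. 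This is only a labelling issue, since cycle-maximality forces $N_0\cap N_2=\emptyset$ whenever $u_1u_3$ is a chord, so the rotation is compatible with your earlier symmetry reduction.
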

\begin{proof}
By \cref{lem:nini1empty} we have $N_i\cap N_{i+1}=\emptyset$ for $i \in \mZ_4$. 
By \cref{lem:cycleveecross} none of the following occur: $N_0\cap N_2\ne \emptyset$ and $N_1\cap N_3\ne \emptyset$; $N_0\cap N_2\ne \emptyset$ and $u_1u_3 \in E(G)$; or $N_1\cap N_3\ne \emptyset$ and $u_0u_2 \in E(G)$.

Suppose that $G$ has a vee.  By symmetry, we may assume that $N_0 \cap N_2 \ne \emptyset$; then by \cref{lem:cycleveecross}, $N_1 \cap N_3 = \emptyset$ and $u_1 u_3 \notin E(G)$.  If $N_1 \ne \emptyset$ and $N_3 \ne \emptyset$, then $G$ has a $Y$ subgraph.
So at least one of $N_1, N_3$ is empty: without loss of generality assume that $N_3 = \emptyset$.
If $u_0 u_2 \in E(G)$ then $G$ may be regarded as a spiked necklace with three beads.  Two are $K_{1,1,0}$ beads $u_0 u_1$, $u_1 u_2$ and the other is a $K_{1,1,t}$ bead with primary vertices $u_0$ and $u_2$, formed by $u_0 u_2$, $u_0 u_3 u_2$ and all vees $u_0 w u_2$, with $t = |N_0 \cap N_2| + 1 \ge 2$.  The edges $w u_1$ for $w \in N_1$ are spikes.
If $u_0 u_2 \notin E(G)$ then in a similar way $G$ may be regarded as a spiked necklace with two $K_{1,1,0}$ beads and a $K_{2,t}$ bead, $t \ge 2$.  We may therefore assume that $G$ does not have a vee.

If $G$ has no chords, then $G$ is a cycle with potential pendant edges at each vertex, which is a spiked necklace.
If $G$ has exactly one chord, we may suppose without loss of generality that it is $u_0u_2$.
If $G$ has pendant edges attached to at most two vertices of $C$, then $G$ satisfies \crefWithTheorem{smallG}.
If $G$ has pendant edges at $u_1$, $u_3$, and at least one of $u_0$ or $u_2$, then $G$ has a $Y$ subgraph, a contradiction.
If $G$ has pendant edges at $u_0$, $u_2$, and one of $u_1$ or $u_3$, then it is a spiked necklace with two $K_{1,1,0}$ beads and a $K_{1,1,1}$ bead.
We may therefore assume that $G$ has two chords.

Then $G$ has pendant edges attached to at most two vertices in $C$, otherwise $G$ has a $Y$.
Thus, $G$ satisfies \crefWithTheorem{smallG}.
\end{proof}

We henceforth (for the rest of this section) assume that $k\ge 5$.  Because $k \ge 5$, if we have a vee then there is a unique subpath of $C$ of length $2$ joining the endpoints of the vee.  We say the middle vertex of this path is \emph{enclosed} by the vee, and the edges of this path are \emph{covered} by the vee.  We can similarly define the enclosed vertex and covered edges for a $2$-chord.
While vees do not cross $2$-chords or other vees, there may be multiple vees that cover the same pair of edges, and vees may cover the same pair of edges as a $2$-chord.

We now show that all neighbors of the enclosed vertex of a vee are on $C$.

\begin{lemma}\label{lem:cyclependv}
Suppose $i \in \mZ_k$.  Then we do not have $N_i \ne \emptyset$ and $N_{i-1} \cap N_{i+1} \ne \emptyset$.
\end{lemma}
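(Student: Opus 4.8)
The plan is to argue by contradiction and produce a $Y$ subgraph, exactly parallel to the proof of \cref{lem:vwithpend} but exploiting the fact that $C$ has no endpoints. Suppose that $N_i \neq \emptyset$ and $N_{i-1} \cap N_{i+1} \neq \emptyset$, and fix $w \in N_i$ and $u \in N_{i-1} \cap N_{i+1}$. The first step is the distinctness bookkeeping. We have $w \neq u$, since otherwise the common vertex would lie in $N_{i-1} \cap N_i$, contradicting \cref{lem:nini1empty}; moreover $w, u \notin V(C)$ by the definition of $\N$. Since $k \ge 5$, the five consecutive cycle vertices $u_{i-3}, u_{i-2}, u_{i-1}, u_i, u_{i+1}$ are pairwise distinct (one checks this directly, including the extreme case $k = 5$, where $u_{i-3} = u_{i+2}$ but the five vertices are still all distinct). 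Hence $u_{i-1}, u_i, w, u, u_{i+1}, u_{i-2}, u_{i-3}$ are seven distinct vertices.

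The second step is to verify that the three paths $u_{i-1} u_i w$, $u_{i-1}\, u \, u_{i+1}$, and $u_{i-1} u_{i-2} u_{i-3}$ form a $Y$ subgraph centered at $u_{i-1}$: they pairwise meet only in $u_{i-1}$, and every edge used lies in $G$, namely $u_{i-1} u_i$, $u_{i-1} u_{i-2}$, $u_{i-2} u_{i-3} \in E(C)$, then $u_i w \in E(G)$ because $w \in N_i$, and $u_{i-1} u,\, u_{i+1} u \in E(G)$ because $u \in N_{i-1} \cap N_{i+1}$. This contradicts the hypothesis that $G$ has no subgraph isomorphic to $Y$, completing the proof.

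I do not anticipate a genuine obstacle: this is the cyclic analogue of \cref{lem:vwithpend}, and it is if anything simpler, because the standing assumption $k \ge 5$ means we never run short of room on $C$. Unlike the path setting, where one had to treat the boundary cases $i \ge 3$ and $i \le \ell-3$ separately, here a single choice of center works uniformly for every $i \in \mZ_k$. The only point requiring a moment's care is confirming that the seven listed vertices are distinct when $k = 5$. As happened after \cref{lem:vwithpend}, I expect this lemma to be used immediately afterward to deduce that two distinct vees of $C$ cannot cross.
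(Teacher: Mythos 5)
Your proof is correct and essentially identical to the paper's: both take $w \in N_i$ and a vertex of $N_{i-1}\cap N_{i+1}$, note they are distinct (the paper cites \cref{lem:intersect2Ni}, you cite \cref{lem:nini1empty}; both work), and build the same $Y$ centered at $u_{i-1}$ from the paths $u_{i-1}u_iw$, $u_{i-1}\,u\,u_{i+1}$, and $u_{i-1}u_{i-2}u_{i-3}$. Your extra distinctness check for $k=5$ is a harmless elaboration the paper leaves implicit, and your anticipated use of the lemma (\cref{cor:cycleveecross}) is exactly what follows in the paper.
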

\begin{proof}
Suppose that $w \in N_i$ and $v \in N_{i-1} \cap N_{i+1}$.  Then $w \ne v$ by \cref{lem:intersect2Ni}.  Since $k \ge 5$, the paths $u_{i-1} u_{i-2} u_{i-3}$, $u_{i-1} u_i w$, and $u_{i-1} v u_{i+1}$ form a $Y$ subgraph, which is a contradiction.
\end{proof}

Except in some situations covered by \crefWithTheorem{smallG}, all chords are $2$-chords.

\begin{lemma}\label{lem:cycleboundchords}
If there is an $i$-chord for $i\ge 3$, then $k=6$ and $G$ satisfies \crefWithTheorem{smallG}.
\end{lemma}
\begin{proof}
Suppose there is an $i$-chord $e$ for $i \ge 3$.
If $k=5$, this is not possible.
If $k=6$, then $e$ is a $3$-chord.  If $V(G) = V(C)$ then \crefWithTheorem{smallG} holds, so we may assume there is at least one vertex $w$ not on $C$.  Then in each of the two possible cases ($w$ is or is not adjacent to an end of $e$) there is a $Y$ subgraph in $G$, which is a contradiction.  We may therefore assume that $k \ge 7$.
Then $G$ has a $Y$ centered at either endpoint of $e$, a contradiction.
Therefore, there is no such $i$-chord.
\end{proof}

Except in some situations covered by \crefWithTheorem{smallG}, all neighbors of the enclosed vertex of a $2$-chord are on $C$.

\begin{lemma}\label{lem:cyclependchord}
If $u_{i-1} u_{i+1} \in E(G)$ and $N_i \ne \emptyset$, then $k=5$ and $G$ satisfies \crefWithTheorem{smallG}.
\end{lemma}
\begin{proof}
Suppose that $u_{i-1}u_{i+1} \in E(G)$, and assume there is $v \in N_i$.  Without loss of generality we may assume that $i=1$.  If $k\ge 6$, then $G$ contains a $Y$ subgraph centered at $u_0$, a contradiction.

Suppose now that $k = 5$.  All vertices in $N_1$ are leaves by \cref{lem:cycleveecross}.
If $\N = N_1$ then \crefWithTheorem{smallG} holds, so we may assume that there is $w \in N_j$ for some $j \ne 1$.  There are two possible cases $j = 2, 3$ (up to symmetry) and in both cases there is a $Y$ subgraph, a contradiction.
\end{proof}

Except in some situations covered by \crefWithTheorem{smallG}, $C$ does not have three consecutive $2$-chords.

\begin{lemma}\label{lem:cycle3conschords}
If there are three $2$-chords $u_i u_{i+2}$, $u_{i+1} u_{i+3}$ and $u_{i+2} u_{i+4}$, then $k \le 6$ and $G$ satisfies \crefWithTheorem{smallG}.
\end{lemma}
\begin{proof}
Suppose three such $2$-chords exist; without loss of generality we may assume they are $u_0 u_2$, $u_1 u_3$ and $u_2 u_4$.
If $k\ge 7$, then $G$ has a $Y$ centered at $u_2$.
If $k=6$ then there must be $v \in N_j$ for some $j$, otherwise $G$ satisfies \crefWithTheorem{smallG}.  There are four possible cases $j = 2, 3, 4, 5$ (up to symmetry) and in each case $G$ has a $Y$ subgraph.

Suppose now that $k=5$.  If $|\N| \le 1$, or $\N = N_j$ for some $j$ and all elements of $N_j$ are leaves, then \crefWithTheorem{smallG} again holds.  So we may assume that $|\N| \ge 2$, with $N_j \ne \emptyset$, and not all elements of $\N$ are leaves in $N_j$.  Thus, if $\N = N_j$, then there is a non-leaf $w \in N_j$, and also $v \in N_j-\{w\}$.  If $\N \ne N_j$, then there is $w \in \N-N_j$ and also $v \in N_j$.  In either case we have $v \in N_j$ and $w \in N_m$ for some $m \ne j$, with $v \ne w$.
The cycle $C$ and the three $2$-chords form a wheel centered at $u_2$ with rim $R=(u_0 u_1 u_3 u_4)$.
Thus, there are three cases up to symmetry: $j=2$ and $m \ne 2$; $u_j$ and $u_m$ are adjacent in $R$; and $u_j$ and $u_m$ are opposite in $R$.  In each case there is a $Y$ subgraph.
\end{proof}

We can now complete the case where $G$ has a long cycle.  Note that we have counterparts for all of the results in \cref{Sec:PathCycle,Sec:PathCase} that deal with the existence and behavior of $2$-chords and vees and neighbors of their enclosed vertices, so we can use the same arguments as in \cref{prop:pathstructure}.

\begin{proposition}\label{prop:cyclestructure}
Suppose $G$ is a connected graph with no $Y$ subgraph and with an edge-dominating cycle $C$ of length at least $4$.  Then $G$ either satisfies \crefWithTheorem{smallG}, or $G$ is a spiked necklace and satisfies \crefWithTheorem{infam}.
\end{proposition}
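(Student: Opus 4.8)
The plan is to imitate the proof of \cref{prop:pathstructure}, using the fact that \cref{Sec:CycleCase} has supplied a cycle analogue of every structural lemma needed there and that, in the cyclic setting, there are no ``end effects'' to complicate the bead analysis.

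I would first reduce to the generic case. Let $C=(u_0u_1\dots u_{k-1})$ be the maximum-length edge-dominating cycle fixed at the start of the section, so $k\ge 4$; if $k=4$ we are done by \cref{lem:cyclek4}, so assume $k\ge 5$. By \cref{lem:cycleboundchords} either $G$ satisfies \crefWithTheorem{smallG} or every chord of $C$ is a $2$-chord, and by \cref{lem:cycle3conschords} either $G$ satisfies \crefWithTheorem{smallG} or $C$ has no three consecutive $2$-chords; in either dichotomy the small exceptional graphs (such as a wheel or $C_6$ plus a diameter, together with cloned leaves) fall into case (a), so I may assume both structural conclusions hold. Then \cref{cor:cycleveecross,cor:cyclechordcrossvee} give that no vee crosses another vee or a $2$-chord, and \cref{obs:domcycle} gives that every edge of $G$ off $C$ is a $2$-chord, lies in a vee, or is a pendant edge incident with a vertex of $C$.

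Next I would partition the $k$ edges of $C$ into consecutive subpaths exactly as in \cref{prop:pathstructure}, but with no type-(a) blocks since there are no $3$-chords: type (b), a triple $u_iu_{i+1}u_{i+2}u_{i+3}$ covered by a crossing pair of $2$-chords $u_iu_{i+2},u_{i+1}u_{i+3}$; type (c), a pair $u_iu_{i+1}u_{i+2}$ covered by a $2$-chord $u_iu_{i+2}$ and possibly by vees; type (d), a pair covered only by vees; and type (e), a single uncovered edge. Since $k\ge 5$ there are at least two blocks, and the non-crossing facts make the partition unambiguous. For each block I would verify, via \cref{lem:cyclependchord} for a $2$-chord and \cref{lem:cyclependv} together with \cref{cor:cyclechordcrossvee} for vees, that every vertex of $C$ interior to the block has all its neighbours on $C$; combined with \cref{lem:intersect2Ni,lem:intersect3Ni} --- so each vertex of $\N$ lies in a unique $N_i\cap N_{i+2}$ or is a leaf at a single $u_i$ --- this shows the block, its covering chord(s), and its covering vees form a bead whose primary vertices are the two endpoints of the block: $K_{2,1,1}$ for type (b), $K_{1,1,t}$ with $t=|N_i\cap N_{i+2}|+1\ge 1$ for type (c), $K_{2,t}$ with $t\ge 2$ for type (d), and $K_{1,1,0}$ for type (e).

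Finally, consecutive blocks share exactly the one $C$-vertex lying between them, which is a primary vertex of each adjacent bead, so stringing the beads around the cycle recovers $C$ with all its chords and vees; the only edges not yet accounted for are pendant edges, and these are attached only at the shared primary vertices. Hence $G$ is a spiked necklace: the lone side condition, that two beads cannot both be $K_{1,1,t_1}$, is automatic because two blocks of size at most $2$ cannot cover $k\ge 5$ edges. Therefore $G$ satisfies \crefWithTheorem{infam}. I do not anticipate a real obstacle, since the content has already been isolated into this section's lemmas; the only thing requiring care is the block-to-bead bookkeeping --- in particular that no stray chord or pendant edge spoils a block, which is exactly what \cref{lem:cyclependchord,lem:cyclependv} prevent --- and checking that $k\ge 5$ rules out degenerate necklaces.
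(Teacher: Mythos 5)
Your proposal is correct and follows essentially the same route as the paper: dispose of $k=4$ via \cref{lem:cyclek4}, invoke the section's lemmas to restrict chords and vees, partition $E(C)$ into the same blocks (b)--(e) with no type (a), and rerun the bead bookkeeping from \cref{prop:pathstructure}. Your explicit remarks that \cref{lem:cycleboundchords,lem:cycle3conschords} are really dichotomies whose exceptional cases land in \crefWithTheorem{smallG}, and that $k\ge 5$ rules out the degenerate two-bead necklace, are careful points the paper leaves implicit.
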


\begin{proof}
If $k=4$, then the result holds by \cref{lem:cyclek4}.  We may therefore assume that $k\ge 5$.
By \cref{lem:edgedomcycle,obs:domcycle,lem:nini1empty,lem:intersect2Ni,lem:intersect3Ni}, 
every edge of $G$ is one of the following: an edge of $P$, a chord of $P$, a pendant edge incident with some vertex of $P$, or an edge of a unique vee.

By \cref{lem:cyclependv} ($\ast$) all neighbors of vertices enclosed by vees lie on $C$, which includes the fact that vees do not cross.
If there is a chord that is not a $2$-chord, or the enclosed vertex of a $2$-chord has a neighbor not on $C$, or there are three consecutive $2$-chords, then by \cref{lem:cycleboundchords,lem:cyclependchord,lem:cycle3conschords} $G$ satisfies \crefWithTheorem{smallG}.
We may therefore assume that all chords are $2$-chords, ($\ast\ast$) the neighbors of a vertex enclosed by a $2$-chord lie on $C$, which includes the fact that a vee does not cross a $2$-chord, and there are no three consecutive $2$-chords.

Facts ($\ast$) and ($\ast\ast$) imply that two vees that cover the same edge, or a vee and a $2$-chord that cover the same edge, have the same endpoints.  These facts will be used below when we apply an argument from the proof of \cref{prop:pathstructure}.

Therefore, we can partition the edges of $C$ into subpaths of length at most three in a way similar to the proof of \cref{prop:pathstructure}.  We no longer have subpaths of type (a), since we no longer have $3$-chords.  So we have the following: (b) triples of edges covered by either of a pair of crossing $2$-chords, (c) pairs of edges covered by a $2$-chord and possibly also by vees, (d) pairs of edges covered only by one or more vees, and (e) single other edges (not covered by any chords or vees).  We apply arguments very similar to the proof of \cref{prop:pathstructure}, replacing $v_i$ by $u_i$ and $L_i$ by $N_i$, and using ($\ast$) and ($\ast\ast$) from above. We conclude that either all other non-pendant edges of $G$ combine with one of these subpaths to form a bead whose primary vertices are the ends of the subpath, and that the pendant edges form spikes, so we have a spiked necklace.
\end{proof}

\section{Main result}\label{Sec:MainResult}
We now restate and prove \cref{thm:noY}.

\structthm*

\begin{proof}
First suppose that $G$ is a graph that does not have $Y$ as a subgraph.  Let $P=v_0 v_1 \dots v_\ell$ be a pointiest longest path in $G$.  If $\ell \le 4$ then the result holds by \cref{prop:small}.  If $\ell \ge 5$ and $P$ does not have any chords from $\{v_0, v_1\}$ to $\{v_{\ell-1}, v_\ell\}$, and $L_1 \cap L_{\ell-1} = \emptyset$, then the result holds by \cref{prop:pathstructure}.  Otherwise, the result holds by \cref{lem:edgedomcycle,prop:cyclestructure}.

Conversely, suppose that $G$ satisfies \crefWithTheorem{smallG} or \crefWithTheorem{infam}.
Assume that $G$ contains $Y$ as a subgraph.
If $G$ satisfies \crefWithTheorem{smallG}, then at most one vertex in each leaf class can be used in the $Y$.
Therefore if we delete all but one vertex in each leaf class to obtain a graph $G'$, then $G'$ still contains $Y$ as a subgraph.
This is a contradiction because $G'$ has at most six vertices.

Now suppose that $G$ satisfies \crefWithTheorem{infam}.
Then the $Y$ subgraph may be centered at a primary vertex or at a secondary vertex of degree at least $3$.
A secondary vertex $v$ of degree at least $3$ occurs only in beads that are $K_4$ or $K_{2,1,1}$.
In either case, $v$ has degree exactly $3$, and a $Y$ cannot be centered at $v$ because 
once the three edges incident with $v$ are used, at least one of the paths cannot be further extended.

We may therefore assume that the $Y$ is centered at a primary vertex $v$.
Pendant edges at $v$ cannot be used in the $Y$ subgraph.
In order to obtain a $Y$ subgraph, there would have to be two paths of length $2$ beginning at the primary vertex $v$ and disjoint except at $v$, that are contained in, or pass through, the same bead containing $v$.  However, no bead allows this.
Hence $G$ does not have $Y$ as a subgraph, as required.
\end{proof}

\section{Forbidding subtrees of $Y$}\label{sec:Subtree}
There are seven proper subtrees of $Y$ with at least one edge. The following results characterize graphs that do not have each of those subtrees.  We include some trivial observations for completeness, without proof.

We will fix a subtree $T$ of $Y$ and consider a connected graph $G$ that has no $T$ subgraph.  We assume that $P = v_0 v_1 \dots v_\ell$ is a pointiest longest path in $G$.  Since $G$ also has no $Y$ subgraph, all results in \cref{Sec:PathCycle} also apply to $G$ and $P$, and we adopt the notation and terminology of \cref{Sec:PathCycle}.  In particular, we often tacitly use the fact that every edge of $G$ is incident with a vertex of $P$ ($P$ is edge-dominating).

We begin by considering the subtrees $T$ of $Y$ that are paths.  For $k \le 5$, having no $P_k$ as a subgraph is equivalent to having no $Y$ as a subgraph and having $\ell \le k-2$, so we can just use results from \cref{Sec:PathCycle}.

\begin{observation}\label{obs:forbidP2}
A connected graph does not have $P_2$ as a subgraph (or minor) if and only if it is $K_1$.
\end{observation}

\begin{observation}\label{obs:forbidP3}
A connected graph does not have $P_3$ as a subgraph (or minor) if and only if it is $K_1$ or $K_2$.
\end{observation}

\begin{theorem}\label{theorem:forbidP4}
A connected graph does not have $P_4$ as a subgraph (or minor) if and only if it is a triangle or a star. 
\end{theorem}

\begin{proof}
This follows from \cref{obs:forbidP2,obs:forbidP3}, and \cref{lem:lengthl2}.
\end{proof}

\begin{theorem}\label{theorem:forbidP5}
A connected graph does not have $P_5$ as a subgraph (or minor) if and only if it is a star, a double star, a graph of order at most $4$, or a graph obtained by attaching pendant edges to exactly one vertex of a triangle. 
\end{theorem}

\begin{proof}
This follows from \cref{obs:forbidP2,obs:forbidP3}, and \cref{lem:lengthl2,lem:lengthl3}.
\end{proof}

Now we consider subtrees of $Y$ that have a vertex of degree $3$.

\begin{observation}\label{thm:forbidClaw}
A connected graph does not contain a claw $K_{1,3}$ as a subgraph (or minor) if and only if it is a path or cycle.
\end{observation}

Let $Y_1$ be the graph obtained from $P_4=x_0 x_1 x_2 x_3$ by attaching a pendant edge to $x_2$, as in \cref{fig:Y1}.  
\begin{figure}[h]
\begin{center}
     \begin{tikzpicture}
        [scale=1,auto=left,every node/.style={circle, fill, inner sep=0 pt, minimum size=1.75mm, outer sep=0pt},line width=.4mm]
        \node (0) at (0,0) [label={[label distance=2pt]below:$x_0$}]{};
        \node (1) at (1,0) [label={[label distance=2pt]below:$x_1$}]{};
        \node (2) at (2,0) [label={[label distance=2pt]below:$x_2$}]{};
        \node (3) at (3,0) [label={[label distance=2pt]below:$x_3$}]{};
        \node (4) at (2,1) {};
        \draw (0) to (1) to (2) to (3);
        \draw (2) to (4);
	\end{tikzpicture}
\end{center}
\caption{$Y_1$}
\label{fig:Y1}
\end{figure}

\begin{theorem}\label{thm:forbidY1}
A connected graph does not have $Y_1$ as a subgraph (or minor) if and only if it is  one of the following:
a graph of order at most $4$, a star, a path, or a cycle.
\end{theorem}

\begin{proof}
The listed graphs obviously do not have $Y_1$ as a subgraph.

Suppose now that $G$ is a connected graph with no $Y_1$ subgraph and order at least $5$.
If $\ell \le 2$ then $G$ is a star, by \cref{obs:forbidP2,obs:forbidP3}, and \cref{lem:lengthl2}.  If $\ell = 3$ then $G$ is a double star or a triangle with pendant edges at one vertex, by \cref{lem:lengthl3}, but all such graphs with order at least $5$ contain a $Y_1$ subgraph.
So we may assume that $\ell \ge 4$.

If $\Ll \ne \emptyset$, then $L_i \ne \emptyset$ for some $i$ with $1 \le i \le \ell-1$, and $G$ has a $Y_1$ subgraph.  So we may assume that $\Ll = \emptyset$, i.e., that all edges in $E(G)-E(P)$ are chords of $P$.  
If $G$ has no chord, or the only chord is $v_0 v_\ell$, then $G$ is a path or cycle.  So we may assume that there is a chord $v_i v_j$, $i < j$, with $i \ne 0$ or $j \ne \ell$.

Suppose that $j \ge i+3$.  If $i \ne 0$ then the paths $v_{i-1} v_i v_{i+1}$ and $v_i v_j v_{j-1}$ form a $Y_1$ subgraph, and a symmetric argument applies if $j \ne \ell$.

So we may assume that $j = i+2$, where $0 \le i \le \ell-2$.
If $i=0$, then $v_0 v_2 v_1$, $v_2 v_3 v_4$ form a $Y_1$ subgraph, and a symmetric argument applies if $j = \ell$, i.e., $i = \ell-2$.
If $1 \le i \le \ell-3$ then $v_{i-1} v_i v_{i+1}$, $v_i v_{i+2} v_{i+3}$ form a $Y_1$ subgraph.
Thus, we obtain no graphs beyond those in the statement of the theorem.
\end{proof}

Let $Y_2$ be the graph obtained from $P_5=x_0x_1x_2x_3x_4$ by attaching a pendant edge to $x_2$, as in \cref{sub@fig:Y2}. Let $\TS$ be the family of graphs obtained by taking a path of length at least $1$ and at each endpoint attaching either a triangle or the center of a star $K_{1,t}$ with $t \ge 1$.  An example appears in \cref{sub@fig:TS}. 
\begin{figure}[h!]
\begin{subfigure}[b]{.47\textwidth}
\begin{center}
     \begin{tikzpicture}
        [scale=1,auto=left,every node/.style={circle, fill, inner sep=0 pt, minimum size=1.75mm, outer sep=0pt},line width=.4mm]
        \node (0) at (0,0) [label={[label distance=2pt]below:$x_0$}]{};
        \node (1) at (1,0) [label={[label distance=2pt]below:$x_1$}]{};
        \node (2) at (2,0) [label={[label distance=2pt]below:$x_2$}]{};
        \node (3) at (3,0) [label={[label distance=2pt]below:$x_3$}]{};
        \node (4) at (4,0) [label={[label distance=2pt]below:$x_4$}] {};
        \node (5) at (2,1){};
        \draw (0) to (1) to (2) to (3) to (4);
        \draw (2) to (5);
	\end{tikzpicture}
\end{center}
\caption{$Y_2$}
\label{fig:Y2}
\end{subfigure}
\begin{subfigure}[b]{.47\textwidth}
\begin{center}
     \begin{tikzpicture}
        [scale=1,auto=left,every node/.style={circle, fill, inner sep=0 pt, minimum size=1.75mm, outer sep=0pt},line width=.4mm]
        \node (0) at (0,0) {};
        \node (a) at (0,.5) {};
        \node (b) at (0,1) {};
        \node (1) at (1,.5){};
        \node (2) at (2,.5) {};
        \node (3) at (3,.5) {};
        \node (4) at (4,1) {};
        \node (5) at (4,0) {};
        \draw (0) to (1) to (2) to (3) to (4) to (5) to (3);
        \draw (a) to (1) to (b);
	\end{tikzpicture}
\end{center}
\caption{A member of the family $\TS$}
\label{fig:TS}
\end{subfigure}
\end{figure}

\begin{theorem}\label{thm:forbidY2}
A connected graph does not have $Y_2$ as a subgraph (or minor) if and only if it is one of the following:
a path, a cycle, a star, a member of $\TS$, a $K_{1,1,t}$ ($t \ge 2$) or $K_{2,t}$ ($t \ge 2$) bead with possibly pendant edges attached at primary vertices, or a graph obtained from a graph of order at most $5$ by cloning leaves.
\end{theorem}
\begin{proof}
The listed graphs do not have $Y_2$ as a subgraph.
Cloning leaves does not increase $\alpha'$, so if $G$ is obtained from a graph $G'$ of order at most $5$ by cloning leaves, then $\alpha'(G) = \alpha'(G') \le 2$.  But $\alpha'(Y_2) = 3$, so $Y_2$ is not a subgraph of $G$.
In the other cases, it is clear that the center vertex of any path $Q$ of length $4$ has no neighbor outside of $Q$, so $Q$ cannot be contained in a $Y_2$ subgraph, and hence $G$ has no $Y_2$ subgraph.

Suppose now that $G$ is a connected graph with no $Y_2$ subgraph and order at least $6$.
If $\ell \le 2$ then $G$ is a star, by \cref{obs:forbidP2,obs:forbidP3}, and \cref{lem:lengthl2}.  If $\ell = 3$, then $G$ is a double star or a triangle with pendant edges at one vertex, by \cref{lem:lengthl3}. Thus, $G$ is obtained from a graph of order at most $4$ by cloning leaves.
So we may assume that $\ell \ge 4$.  The following general claims hold in this situation.

\setcounter{claim}{0}

\begin{claim}\label{clm:b0} If $L_i \ne \emptyset$ then $i = 1$ or $\ell-1$.  By \cref{lem:k0klempty} we have $L_0 = L_\ell = \emptyset$, and we have a $Y_2$ subgraph if $L_i \ne \emptyset$ for $2 \le i \le \ell-2$.
\end{claim}

\begin{claim}\label{clm:b2}
By \cref{lem:endchord}, none of the following happen: (a) $L_1 \ne \emptyset$ and $v_0 v_2 \in E(G)$, (b) there is a leaf in $L_1$ and $v_0 v_3 \in E(G)$, (c) $L_{\ell-1} \ne \emptyset$ and $v_{\ell-2} v_\ell \in E(G)$, (d) there is a leaf in $L_{\ell-1}$ and $v_{\ell-3} v_3 \in E(G)$.
\end{claim}

Suppose now that $\ell = 4$, so by \cref{clm:b0} $L_0 = L_2 = L_4 = \emptyset$, and \cref{clm:b2} puts restrictions on the chords $v_0 v_2, v_0 v_3, v_1 v_4, v_2 v_4$.  The following also holds in this case.

\begin{claim}\label{clm:b1}
When $\ell=4$ we may assume that $v_0 v_4 \notin E(G)$.  If $v_0 v_4 \in E(G)$, there is a cycle $C$ with $V(C) = V(P)$, so if $\Ll \ne \emptyset$ there is a path longer than $P$, which is impossible, and otherwise $G$ has order $5$.
\end{claim}

Assume first that $L_1 \cap L_3 \ne \emptyset$.  By \cref{clm:b1,clm:b2}, $v_0 v_4, v_0 v_2, v_2 v_4 \notin E(G)$, so the only possible chords are $v_0 v_3$, $v_1 v_3$ and $v_1 v_4$.
If $v_1 v_3 \in E(G)$ there is a $K_{1,1,t}$ bead, $t \ge 2$, with primary vertices $v_1, v_3$ and secondary vertices including $v_2$, $L_1 \cap L_3$ and possibly $v_0$ (if $v_0 v_3 \in E(G)$) or $v_4$ (if $v_1 v_4 \in E(G)$); the remaining edges form at most two $K_{1,1,0}$ beads and spikes.  If $v_1 v_3 \notin E(G)$ we have a similar situation but with a $K_{2,t}$ bead, $t \ge 2$.  In either case we have a graph described in the theorem.

Assume now that $L_1 \cap L_3 = \emptyset$.  Then all vertices in $L_1 \cup L_3$ are leaves.  If $L_1 \ne \emptyset$ then by \cref{clm:b2,clm:b1} there are no chords incident with $v_0$.  So whether $L_1=\emptyset$ or not, all elements of $L_1$ are clones of $v_0$.  Similarly, all elements of $L_3$ are clones of $v_4$.  Therefore, $G$ is obtained from a graph of order $5$ by (possibly) cloning leaves.  This concludes the case $\ell = 4$.

Suppose now that $\ell \ge 5$.  \cref{clm:b0,clm:b2} apply, and \cref{lem:pos3pchords} describes the possible chords of $P$.
If $w \in L_1\cap L_{\ell-1}$, then the paths $v_1 v_0$, $v_3 v_2 v_1 w v_{\ell-1}$ form a $Y_2$ subgraph, so $L_1 \cap L_{\ell-1} = \emptyset$.  Hence all vertices in $L_1$ and $L_{\ell-1}$ are leaves.

If $P$ has the chord $v_0 v_\ell$ then $P \cup \{v_0v_\ell\}$ is a cycle of length $\ell+1 \ge 6$, and we must have $L_i = \emptyset$ for all $i$, otherwise we obtain a path longer than $P$.  Any chord in a cycle of length $6$ or more creates a $Y_2$ subgraph, so $G$ must be a cycle.
If $P$ has the chord $v_1 v_\ell$ then the paths $v_1 v_0$, $v_3 v_2 v_1 v_\ell v_{\ell-1}$ form a $Y_2$ subgraph; a symmetric argument applies for $v_0 v_{\ell-1}$.
If $P$ has the chord $v_1 v_{\ell-1}$ then the paths $v_1 v_0$, $v_3 v_2 v_1 v_{\ell-1} v_\ell$ form a $Y_2$ subgraph.
If $P$ has the chord $v_0 v_3$ then the paths $v_3 v_0$, $v_1v_2 v_3 v_4 v_5$ form a $Y_2$ subgraph; a symmetric argument applies for $v_{\ell-3} v_\ell$.
If $P$ has a chord $v_i v_j$ with $j = i+2$ and $1 \le i \le \ell-4$, then the paths $v_{i+2} v_{i+1}$, $v_{i-1} v_i v_{i+2} v_{i+3} v_{i+4}$ form a $Y_2$ subgraph; a symmetric argument applies if $4 \le j \le \ell-1$, i.e., $2 \le i \le \ell-3$.  At this point we have dealt with all possible chords except $v_0 v_2$ and $v_{\ell-2} v_\ell$.

If $v_0 v_2$ is a chord then $L_1 = \emptyset$ by \cref{clm:b2}, and we have a triangle $(v_0 v_1 v_2)$ attached to a path at $v_2$; if $v_0 v_2$ is not a chord then we have a star with center $v_1$ and leaves in the set $\{v_0\} \cup L_1$ attached to a path at $v_1$.  A similar argument applies at the other end of the path, so we have a graph in the family $\TS$ (which includes paths of length at least $3$).
\end{proof}

\section{Infinite versions}\label{sec:BoundedPathLength}
In this section, we consider infinite analogues of the results in \cref{Sec:MainResult,sec:Subtree}.

Our previous arguments based on pointiest longest paths and longest edge-dominating cycles still work in the case of possibly infinite graphs of \emph{bounded path length}, i.e., where there is some finite number $\ell$ so that all paths in $G$ have length at most $\ell$.  The only difference is that in situations where we could have many vertices that are clones (such as in $K_{1,1,t}$ or $K_{2,t}$ beads, or with spikes, or with cloning leaves in small graphs) we can now have infinitely many such vertices.  Some of the choices we made in \cref{Sec:PathCycle} were intended to make the arguments work easily for infinite graphs of bounded path length, including our proof of \cref{lem:domset}, and the way in which we defined pointiness.

The results in this section can be proved without using the Axiom of Choice, even in a weak form such as K\"{o}nig's Lemma.  We are careful to use arguments that do not require a choice principle.  We discuss this further in \cref{aoc} at the end of this section.

In the infinite case a strand or necklace is built by stringing together a finite number of (possibly infinite) beads into a necklace or strand, and we get a spiked necklace or strand by adding (possibly infinitely many) optional spikes incident with primary vertices that are in exactly two beads.  A graph is a member of the family $\BinfB$ if and only if it is a spiked strand or spiked necklace.  All graphs in $\BinfB$ are connected and have bounded path length.  \cref{thm:noY} extends straightforwardly to the following.

\begin{proposition}\label{prop:noYinfbd}
A possibly infinite connected graph $G$ with bounded path length has no subgraph isomorphic to $Y$ if and only if either
\begin{statement}
\item\label{infbdsmallG} $G$ is obtained from a graph with at most six vertices by adding (possibly infinitely many) optional clones of leaves, or 
\item\label{infbdinfam} $G$ belongs to the family $\B_\infty^{\fam0 B}$.
\end{statement}
\end{proposition}

The following obvious corollary will be used in the proof of \cref{prop:unboundedpathl}.

\begin{corollary}\label{cor:sn}
Every possibly infinite graph $G$ with maximum path length $\ell$,  $6 \le \ell < \infty$, and no $Y$ subgraph is a spiked strand or spiked necklace.
\end{corollary}

Now we consider the case where $G$ has unbounded path length.  First we show that every block in a graph with no $Y$ subgraph has bounded path length.

\begin{observation}\label{obs:pathblock}
Suppose a path $P$ in a graph $G$ intersects a block $B$ of $G$.  Then $P \cap B$ is a subpath of $P$.
\end{observation}

\begin{lemma}\label{lem:boundedpathl}
Suppose $G$ is an infinite graph with no $Y$ subgraph.  Then each block $B$ of $G$ has bounded path length, i.e., there is some finite $\ell$ such that every path in $B$ has length at most $\ell$.
Hence, each union of finitely many blocks of $G$ has bounded path length.
\end{lemma}

\begin{proof}
If $B$ is $K_1$ or $K_2$, then $B$ has bounded path length, so we may assume that $B$ is $2$-connected.  Suppose for a contradiction that $B$ has unbounded path length.

\setcounter{claim}{0}
\begin{claim}\label{clm:c1}
If $C$ is a cycle in $B$, then there exists a path $P$ in $G$ with at least two vertices such that $V(C) \cap V(P) = \emptyset$.  To prove this, suppose $|V(C)| = k$, and choose a path $Q$ with at least $2k+2$ vertices.  Then $Q-V(C)$ has at most $k+1$ components and at least $k+2$ vertices, so one of its components has at least two vertices, and is the desired $P$.
\end{claim}

We first show that $B$ has a cycle $C_1$ of length at least $5$.  Choose a cycle $C_0$ in $B$, which exists because $B$ is $2$-connected.  If $C_0$ has length at least $5$, let $C_1=C_0$.  Otherwise, $C_0$ has length $3$ or $4$, and so $C_0$ contains a path of length at least $2$ between any two of its vertices.
Apply \cref{clm:c1} to find a path $P_0$ with at least two vertices that is vertex-disjoint from $C_0$.  
Since $B$ is $2$-connected, by Menger's Theorem there exist two vertex-disjoint paths $R_1, R_2$ from $P_0$ to $C_0$, with no internal vertices in $P_0$ or $C_0$: say $R_i$ goes from $u_i \in V(P_0)$ to $v_i \in V(C)$.  By combining $R_1$, $R_2$, the path in $P_0$ between $u_1$ and $u_2$, and a path of length at least $2$ in $C_0$ between $v_1$ and $v_2$, we obtain a cycle $C_1$ of length at least $5$.

So we have a cycle $C_1$ of length at least $5$.  Applying \cref{clm:c1} again, there is a path $P_1$ with at least two vertices that is vertex-disjoint from $C_1$.  There exists a path $P_2$ from $P_1$ to $C_1$, with no internal vertices in $P_1$ or $C_1$: say $P_2$ goes from $w \in V(P_1)$ to $x \in V(C_1)$.  Let $w'$ be a neighbor of $w$ in $P_1$.  Then $\{w'w\} \cup P_2 \cup C_1$ contains a $Y$ subgraph, a contradiction.  Therefore $B$ cannot have unbounded path length.

The conclusion for finite unions of blocks follows from \cref{obs:pathblock}.
\end{proof}

We will show that all blocks are beads, and the following lemma will be used to show that they must be connected together at primary vertices.  A \emph{bead graph} is a graph that is the underlying graph of a bead, i.e., a bead without designated primary vertices.

\begin{lemma}\label{lem:addtobead}
Suppose $B$ is a (possibly infinite) bead graph.
Suppose we take $A \subseteq V(B)$ and add a path of length $2$ attached to one vertex of $A$, and a pendant edge at each remaining vertex of $A$.  
If the resulting graph $H$ has no $Y$ subgraph, then there is $V_1$ with $A \subseteq V_1 \subseteq V(B)$ such that $(B, V_1)$ is a bead.
\end{lemma}

\begin{proof}
We show that if there is no $V_1$ with $A \subseteq V_1 \subseteq V(B)$ that can be considered as a set of primary vertices of $B$, then $H$ has a $Y$ subgraph.
If $B$ is $K_{1,1,0}$ then $V_1=V(B)$ is a set of primary vertices that contains any $A$.
If $B$ is $K_{1,1,1}$ (a triangle) then any two vertices can be a set $V_1$ of primary vertices, so we only need to consider the case where $A = V(B)$, and then $H$ contains a $Y$ subgraph.
For the remaining cases we use the following.

\setcounter{claim}{0}
\begin{claim}\label{clm:d1}
A $4$-cycle with a path of length $2$ attached to a vertex and a pendant edge attached to an adjacent vertex has a $Y$ subgraph.  This is easy to verify.
\end{claim}

If $B$ is $K_{1,1,2} \cong K_{2,1,1}$ or $K_{2,2}$ then $B$ has a unique $4$-cycle $C$, and either pair of opposite vertices of $C$ forms a set $V_1$ of primary vertices.  So the sets $A$ for which there is no $V_1$ must contain a pair of adjacent vertices of $C$, and then $H$ contains a $Y$ subgraph by \cref{clm:d1}.

If $B$ is $K_{1,1,t}$ or $K_{2,t}$ for $t \ge 3$ (including $t$ infinite), then the only possible set of primary vertices $V_1$ consists of the two vertices of degree greater than $2$.
If $A$ is not a subset of this $V_1$, then $H$ contains either a path of length $2$ attached to a vertex not in $V_1$, or a path of length $2$ attached to a vertex of $V_1$ and a pendant edge attached to a vertex not in $V_1$.  In either case, $H$ has a $Y$ subgraph by \cref{clm:d1}.  

If $B$ is $K_4$, let $a_1$ be the vertex of $A$ to which the path of length $2$ is attached.  If $|A|=1$ then $V_1=A$ satisfies the lemma.  If $|A| \ge 2$ then $B$ contains a $4$-cycle $C$ with $a_1$ and another vertex of $A$ adjacent in $C$, so $H$ has a $Y$ subgraph by \cref{clm:d1}.
\end{proof}

\begin{lemma}\label{lem:pendantpath}
If $G$ is connected and has unbounded path length, and $H$ is a subgraph of $G$ of bounded path length, then for any positive integer $k$ there is a path that begins at a vertex of $H$, is otherwise disjoint from $H$, and has length $k$.
\end{lemma}

\begin{proof}
Let $\ell$ be the maximum path length of $H$.
Since $G$ has unbounded path length, there is a path $Q$ of length at least $\ell + 2k$ in $G$.  If $Q$ intersects $H$, then by \cref{obs:pathblock} there is a subpath $R$ of $Q$ (before or after the subpath $Q \cap H$) of length at least $k$ that contains a vertex $v$ of $H$ but is otherwise disjoint from $H$.  If $Q$ does not intersect $H$, then there is a path $R_1$ from $v \in V(H)$ to $w \in V(Q)$, with no internal vertices in $H$ or $Q$; one of the two subpaths into which $w$ divides $Q$, call it $R_2$, must have length at least $k$, and so $R = R_1 \cup R_2$ is a path of length at least $k$ that contains $v$ but is otherwise disjoint from $H$.  In either case, the subpath of $R$ containing $v$ and of length $k$ is the desired path.
\end{proof}

Now we introduce some terms to describe graphs with unbounded path length.  A \emph{ray} or \emph{one-way-infinite path} is a path $v_0 v_1 v_2 \dots$.  A \emph{two-way-infinite path} is a path $\dots v_{-2} v_{-1} v_0 v_1 v_2 \dots$.  A \emph{one-way-infinite strand} is obtained by stringing together (possibly infinite) beads $B_0, B_1, B_2, \dots$ where $B_i$ is strung to $B_{i+1}$ for $i \ge 0$.  In a one-way-infinite strand the initial bead $B_0$ may be a $K_4$. A \emph{two-way-infinite strand} is obtained by stringing together (possibly infinite) beads $\dots, B_{-2},B_{-1},B_0, B_1, B_2, \dots$ where $B_i$ is strung to $B_{i+1}$ for $i \in \mZ$.  A two-way-infinite strand does not have $K_4$ as a bead.  We may also add (possibly infinitely many) optional spikes incident with primary vertices that are in exactly two beads, to obtain \emph{spiked one-way- and two-way-infinite strands}.

The family $\BinfU$ consists of all one-way- and two-way-infinite spiked strands.  All graphs in $\BinfU$ are connected and have unbounded path length.

\begin{proposition}\label{prop:unboundedpathl}
An infinite connected graph $G$ with unbounded path length has no subgraph isomorphic to $Y$ if and only if $G$ belongs to the family $\BinfU$.
\end{proposition}
\begin{proof}
The graphs in $\BinfU$ do not have $Y$ as a subgraph, by arguments similar to those used in the proof of \cref{thm:noY} in \cref{Sec:MainResult}.

Now suppose that $G$ is an infinite graph with unbounded path length that does not have $Y$ as a subgraph.
We first show that each block $B$ of $G$ is a bead graph.  By \cref{lem:boundedpathl}, $B$ has bounded path length.  Since $G$ has unbounded path length, by \cref{lem:pendantpath} there is a path $R$ that begins at a vertex $v$ of $B$, is otherwise disjoint from $B$, and has length $6$.
Now $B \cup R$ is a subgraph of $G$, and hence $Y$-free, and it has finite maximum path length at least $6$, and $B$ is one of its blocks.  Therefore, by \cref{cor:sn} it is a spiked strand or spiked necklace.  Since it has a path of length $6$ attached to $v$, it is not a spiked necklace, so it is a spiked strand.  Since its block-cutvertex tree is a path, it has no spikes, so it is a strand.
Thus, $B$ is a block of the strand $B \cup R$, and thus $B$ is a bead graph.

We now show that we can consider $B$ not just as a bead graph, but as a bead where $B$ is only attached to other blocks of $G$ at its primary vertices.  (Note that $B$ might actually be a pendant edge that will end up as a spike in our final structure.)  Let $A$ be the set of cutvertices of $G$ that lie in $B$, which includes $v$.  Then $v \in A$ and there is a subpath of $R$ of length $2$ attached to $v$.  Moreover, $G$ contains at least one edge not in $B$ incident with each vertex in $A-\{v\}$.  Together with $B$ this path and these edges form a subgraph $G'$ of $G$ which satisfies the conditions of \cref{lem:addtobead}, so we may consider $B$ as a bead, and $A$ is a subset of its primary vertices.

At this point we know that every block of $G$ is a bead and every cutvertex of $G$ is a primary vertex of each of its incident blocks.  However, we need to show that $G$ has a path-like structure.  So let $G'$ be the subgraph of $G$ obtained by deleting all pendant vertices and pendant edges of $G$.  Then $G'$ is still a connected graph of unbounded path length with no $Y$ subgraph.
We claim that in $G'$ each cutvertex belongs to at most two blocks.  So suppose that $v$ belongs to blocks $B_1, B_2, B_3$ in $G'$.  Let $i \in \{1,2,3\}$.  If $B_i$ is not just $K_{1,1,0}$, then it contains a path $v w_i x_i$ beginning at $v$.  If $B_i$ is $K_{1,1,0}$, then it is an edge $v w_i$, and since $B_i$ is a block of $G'$ it was not a pendant edge in $G$, so there is some edge $w_i x_i \in E(G)$, $x_i \ne v$.  Now we have three paths $v w_i x_i$ for $i \in \{1,2,3\}$ in $G$ (although not necessarily in $G'$), which form a $Y$ in $G$, which is a contradiction.

Let $T$ be the block-cutvertex tree of $G'$, with white vertices that are cutvertices of $G'$ (which are primary vertices) and black vertices representing blocks of $G'$ (which are beads).  We just showed that each white vertex has degree at most $2$, and each black vertex has degree at most $2$ because a bead has at most two primary vertices.  Thus, $T$ is a path, and because each block has bounded path length, $T$ must be infinite, otherwise $G'$ has bounded path length.  Therefore, $T$ is either a one-way- or two-way-infinite path.

If $T$ is a two-way-infinite path then $G'$ is a two-way-infinite strand, and the pendant edges that we deleted from $G$ to form $G'$ are incident with cutvertices, which must be primary vertices in $G'$, so $G$ is a spiked two-way-infinite strand.  If $T$ is a one-way-infinite path then $T=b_0 v_1 b_1 v_2 \dots$, where $b_i$, $i \ge 0$, represents a block $B_i$ and each $v_i$, $i \ge 1$, is a cutvertex of $G'$.   Now $G'$ is a one-way-infinite strand.  But we need to be careful about adding back the pendant edges to form $G$.  If $B_0$ is a bead with two primary vertices (i.e., not $K_4$), one primary vertex of $B_0$ must be $v_1$, so let $v_0$ be the other primary vertex of $B_0$ in $G$.  If we deleted pendant edges incident with $v_0$ in forming $G'$ from $G$, we treat one of those pendant edges as an additional bead $B_{-1}$, and let $G'' = B_{-1} \cup G'$; otherwise, let $G'' = G'$.  Now $G''$ is a one-way-infinite strand, and $G$ is obtained from $G''$ by adding pendant edges incident with cutvertices of $G''$, which are primary vertices belonging to exactly two beads of $G''$, so $G$ is a one-way-infinite spiked strand.  Thus, $G \in \BinfU$.
\end{proof}

Our results can now be summarized as follows.

\begin{theorem}\label{thm:infinite}
A possibly infinite connected graph $G$ has no subgraph isomorphic to $Y$ if and only if either
\begin{statement}
\item\label{infsmallG} $G$ is obtained from a graph with at most six vertices by adding (possibly infinitely many) optional clones of leaves, or 
\item\label{infinfam} $G$ belongs to the family $\BinfB \cup \BinfU$, i.e., $G$ is a spiked strand, spiked necklace, spiked one-way-infinite strand, or spiked two-way-infinite strand.
\end{statement}
\end{theorem}

We can now give infinite analogues for the theorems in \cref{sec:Subtree}.
If we forbid a path $P_k$, then the graphs of interest have bounded path length, so again our previous arguments apply, and the theorems are unchanged except that when we can have many vertices that are clones we can now have infinitely many.  We therefore do not restate \cref{obs:forbidP2,obs:forbidP3} or \cref{theorem:forbidP4,theorem:forbidP5}.

\cref{thm:forbidClaw}, where we forbid a claw $K_{1,3}$, also has an obvious infinite version, where we allow one-way- and two-way-infinite paths, so we do not restate that either.

Now, we give an infinite analogue of \cref{thm:forbidY1}. 

\begin{theorem}\label{thm:infforbidY1}
A possibly infinite connected graph does not have $Y_1$ as a subgraph if and only if it is one of the following: a graph of order at most $4$, a (possibly infinite) star, a path, a cycle, a one-way-infinite path, or a two-way-infinite path.
\end{theorem}

\begin{proof}
The listed graphs do not have $Y_1$ as a subgraph.

Suppose now that $G$ is a connected graph with no $Y_1$ subgraph.  If $G$ has bounded path length then the argument from the proof of \cref{thm:forbidY1} applies, and we have a graph of order at most $4$, a path, a cycle, or a (possibly infinite) star.  So we may assume that $G$ has unbounded path length.  If $G$ has a vertex $v$ of degree at least $3$, then we apply \cref{lem:pendantpath} to find a path of length $2$ attached to the subgraph consisting of $v$ and all its incident edges, which creates a $Y_1$ subgraph.  Hence, all vertices of $G$ have degree at most $2$, and $G$ is a one-way- or two-way-infinite path.
\end{proof}

Finally, we have an infinite analogue of \cref{thm:forbidY2}.
Let $\TSinfB$ be the extension of $\TS$ to possibly infinite graphs of bounded path length, where the star or stars attached to the end of a finite path may be infinite.
Let $\TSinfone$ be the family of graphs obtained by attaching a triangle or the center of a (possibly infinite) star to the initial vertex of a one-way-infinite path.

\begin{theorem}\label{thm:infforbidY2}
A possibly infinite connected graph does not have $Y_2$ as a subgraph if and only if it is one of the following: a (possibly infinite) star, a path, a cycle, a one-way- or two-way-infinite path, a member of $\TSinfB \cup \TSinfone$, a $K_{1,1,t}$ or $K_{2,t}$ bead ($t \ge 2$ and possibly infinite) with optional (possibly infinitely many) pendant edges attached at primary vertices, or a graph obtained from a graph of order at most $5$ by adding (possibly infinitely many) clones of leaves.
\end{theorem}

\begin{proof}
By similar arguments to those in the proof of \cref{thm:forbidY2}, the listed graphs do not have $Y_2$ as a subgraph.

Suppose now that $G$ is a connected graph with no $Y_2$ subgraph.  If $G$ has bounded path length then the arguments from the proof of \cref{thm:forbidY2} apply, and we have (a possibly infinite version of) one of the graphs in \cref{thm:forbidY2}.  So we may assume that $G$ has unbounded path length.

Assume $G$ has two vertices $v_1, v_2$ of degree at least $3$.  Let $H$ be a subgraph of $G$ consisting of a shortest $v_1 v_2$-path $P$, and two edges not on $P$ incident with each of $v_1$ and $v_2$.  We can apply \cref{lem:pendantpath} to find a path $Q$ in $G$ of length $5$ joined to $H$ only at one of the endpoints of $Q$, creating a subgraph $H'$ of $G$.  Now $H'$ is a finite graph with no $Y_2$ subgraph, at least two vertices of degree at least $3$, and a path of length $5$ intersecting the rest of $H'$ only at one of its endpoints.  But none of the graphs listed in \cref{thm:forbidY2} have these properties, so this is a contradiction.  Hence, $G$ has at most one vertex of degree at least $3$.

Suppose $G$ has exactly one vertex $v$ of degree at least $3$.  Each component $C$ of $G-v$ is either a finite path or a one-way-infinite path.  If $C$ is finite then it is adjacent to $v$ only at one or both of its endpoints.  If $C$ is infinite then it is adjacent to $v$ only at its endpoint.
Since $G$ has unbounded path length, at least one component $C_1$ is a one-way-infinite path.
Suppose there is a component $C_2$ joined to $v$ at both endpoints, so $C_2 \ne C_1$ is a finite path of length at least $1$.  If $C_2$ has length $2$ or more, or there is a component $C_3 \ne C_1, C_2$, then there is a $Y_2$ subgraph.  So $C_2$ is a path of length $1$, and $C_1$ and $C_2$ are the only components.  Thus, $G$ is a triangle attached to a one-way-infinite path.
Now suppose that all components are joined to $v$ only at one endpoint, so there are at least three components. If any component $C \ne C_1$ is not a single vertex, then we have a $Y_2$ subgraph.  So all $C \ne C_1$ are single vertices, and $G$ is a star attached to a one-way-infinite path.
In both situations $G \in \TSinfone$.

If $G$ has no vertices of degree at least $3$, then $G$ is a one-way- or two-way-infinite path.
\end{proof}

\begin{remark}\label{aoc}
Many results in infinite graph theory require the use of a choice principle: the Axiom of Choice, or a weak form of it such as K\"{o}nig's Lemma.
Our results in this section do not require the use of a choice principle.
In some situations we use special cases of results which in full generality do require a choice principle.
When we find an infinite path we do not use K\"{on}ig's Lemma, we just use the fact that we have a tree of maximum degree at most $2$.
When we use Menger's Theorem in \cref{lem:boundedpathl} we are using the version for infinite graphs with finite connectivity, which was proved by \erdos. just by considering a finite subgraph (see \cite[p.~337/391]{Kon90}).
In other cases we specifically chose arguments so as not to use a choice principle.
For example, we could have proved \cref{lem:domset} using an argument based on spanning trees and \cref{thm:treenoycat}, but constructing spanning trees in infinite graphs generally requires the Axiom of Choice.
We could have proved the unbounded path length case of \cref{thm:infforbidY2} by constructing an infinite edge-dominating path in a one-way- or two-way-infinite spiked strand (provided by \cref{prop:unboundedpathl}), but doing so would require choosing a path in each bead, so we used a different approach.
\end{remark}

\section{\turan. numbers, pathwidth, and growth constants}\label{sec:pathwidth}

In this section, we discuss the relevance of our results for larger questions regarding graphs with no tree as a minor or subgraph.  These include \turan. numbers and the \erdos.-\sos. conjecture, pathwidth, and growth constants.

The \erdos.-\sos. Conjecture \cite{Erd64} involves the \turan. number for trees.  The \emph{\turan. number $\ex(n,G)$} is the maximum number of edges in an $n$-vertex graph that does not contain a given graph $G$ as a subgraph. \erdos. and \sos. conjectured that for a $k$-vertex tree $T$, $\ex(n, T) \le (k-2)n/2$; graphs with all components $K_{k-1}$ show that this would be sharp for infinitely many $n$.  
A \emph{spider} is a tree obtained by subdividing edges of a star, so it consists of edge-disjoint paths joined at a central vertex; if there are $p$ paths of lengths $\ell_1, \ell_2, \dots, \ell_p$ we denote the spider by $S_{\ell_1, \ell_2, \dots, \ell_p}$.  \wozniak. \cite{Woz96} showed that the conjecture holds when $T$ is a spider of diameter at most $4$, i.e., when $\ell_i \le 2$ for all $i$.  Since $Y=S_{2,2,2}$, this means that the conjecture is true for $T = Y$ or any subtree of $Y$.  This also follows from our results.

More recently, Caro, \patkos., and Tuza \cite{CPT24} considered the \emph{connected \turan. number $\exc(n, G)$}, which is the maximum number of edges in a connected $n$-vertex graph that does not have $G$ as a subgraph.  They observe that for $2$-edge-connected graphs $G$, $\exc(n,G) = \ex(n,G)$, so considering $\exc(n,G)$ separately from $\ex(n,G)$ is only of interest for graphs that are disconnected or have a cutedge.  They determined formulas for $\exc(T,n)$ for a number of small trees $T$, including $Y$ and its subtrees.  Our results confirm the formulas for $Y$ and its subtrees.  In particular, $\exc(n,Y) = \binom{n}{2}$ for $n \le 6$, and $2n-2$ for $n \ge 7$.

Caro, \patkos., and Tuza point out that extremal graphs with no $Y$ subgraph are given by $K_n$ for $n \le 6$, and for $n \ge 7$ by a necklace that we will denote $A_n$, with two beads, one being $K_{2,1,1}$ and the other $K_{1,1,n-4}$.  They also claim \cite[Remark 3.6]{CPT24} that there are other extremal examples for $n \ge 7$.  Their examples are correct for $n=7$: there are exactly three $12$-edge graphs, namely $A_7$, a strand with two $K_4$ beads, and $K_5$ with two pendant edges at the same vertex.  However, their claim is incorrect for $n \ge 8$: \cref{thm:noY} implies that the only extremal example is $A_n$.  The other examples proposed in \cite{CPT24} actually contain $Y$ subgraphs.

Now we consider pathwidth of graphs with a forbidden tree minor.  A sequence $\mathcal{V}=(V_1,V_2,\dots, V_t)$ is a  \emph{path-decomposition} of a graph $G$ if $V_i\subseteq V(G)$ for $1 \le i \le t$; for every edge $e\in E(G)$ there is some $V_i$ that contains both endpoints of $e$; and for $1\le i<j<k\le t$,  $V_i\cap V_k\subseteq V_j$.
Each set $V_i$ is called a \emph{bag}.
The \emph{width} of $\mathcal{V}$ is $\max\{|V_i|-1 \;|\; 1\le i\le t\}$. 
The \emph{pathwidth} of $G$, denoted $\pw(G)$, is defined as the smallest width over all path-decompositions of $G$.

The notions of path-decompositions and pathwidth were defined by Robertson and Seymour in \cite{graphminorsI}, and they proved that the class of graphs obtained by forbidding a forest $F$ as a minor has bounded pathwidth. This was improved to a sharp bound as follows.

\begin{theorem}[Bienstock et al.~\cite{brst-quicklyexcforest}, short proof by Diestel \cite{d-shortproofpathwidththm}]\label{thm:pwnm2}
If $F$ is an $n$-vertex forest, then the pathwidth of $F$-minor-free graphs is at most $n-2$.  Furthermore, $K_{n-1}$ shows that this bound is sharp. 
\end{theorem}

Below we investigate the pathwidth of $Y$-minor-free graphs, but we first note that the graph $Y$ shows up in another context in the theory of pathwidth.  In \cite{kl-fmpw2}, Kinnersley and Langston observed that the graphs with pathwidth at most $1$ have $K_3$ or $Y$ as a forbidden minor, and are caterpillars (this is \cref{thm:treenoycat}).  Their main result was a characterization of graphs with pathwidth at most $2$ in terms of $110$ forbidden minors.

\cref{thm:pwnm2} shows that the sharp upper bound on pathwidth of $Y$-minor-free graphs is $7-2=5$.
However, there is a stronger upper bound for graphs in the family $\B$.  Since $K_4 \in \B$, this bound cannot be less than $3$, and in fact that is the correct bound.

\begin{proposition}\label{prop:pathwidthfam}
The pathwidth of a $Y$-minor-free graph is at most $5$, and the pathwidth of a graph in the family $\B$ is at most $3$.
\end{proposition}

\begin{proof}
As noted above, the pathwidth of $Y$-minor-free graphs is at most $5$, and there are graphs in $\B$ with pathwidth at least $3$.
We will now show that the pathwidth of a graph in the family $\B$ is at most 3.

For a strand, let $B_1, \dots, B_r$ be the beads in order along the strand, where $B_i$ has primary vertices $v_{i-1}$ and $v_i$ for $1 \le i \le r$.  (If $B_1 \cong K_4$ let $v_0=v_1$, and if $B_r \cong K_4$ let $v_{r}=v_{r-1}$.)
At each step we create bags containing all edges of $B_i$, then bags containing all spikes incident with $v_{i}$, then bags containing all edges of $B_{i+1}$, and so on.  All bags containing edges of $B_i$ contain both $v_{i-1}$ and $v_{i}$.
For a bead $B_1$ or $B_r$ isomorphic to $K_4$, we take a bag equal to $V(B_i)$.
For a bead $B_i$ isomorphic to $K_{2,1,1}$ we take two bags $\{v_{i-1}, w_1, w_2\}$ and $\{w_1, w_2, v_{i}\}$, where $w_1$ and $w_2$ are the secondary vertices.
For a bead $B_i$ isomorphic to $K_{1,1,0}$ we take a bag $\{v_{i-1}, v_{i}\}$.
For a bead $B_i$ isomorphic to $K_{1,1,t}$ or $K_{2,t}$ for $t \ge 1$, we label the secondary vertices as $w_1, w_2, \dots, w_t$ and take bags $\{v_{i-1}, w_1, v_{i}\}$, $\{v_{i-1}, w_2, v_{i}\}$, $\dots$, $\{v_{i-1}, w_t, v_{i}\}$.
If $v_i$ is incident with spikes $v_i x_1, v_i x_2, \dots, v_i x_s$ then we take bags $\{v_i, x_1\}$, $\{v_i, x_2\}$, $\dots$, $\{v_i, x_s\}$.
Processing beads and spikes in order along the strand gives the required path decomposition.

For a necklace, let $B_0, B_1, \dots, B_{r-1}$ be the beads in cyclic order, where $B_i$ again has primary vertices $v_{i-1}$ and $v_i$ (subscripts now interpreted modulo $r$).
We put $v_0$ into every bag of the decomposition, but otherwise process the beads and spikes in a similar way to a strand, starting with spikes incident with $v_0$, then edges of $B_1$, then spikes incident with $v_1$, and so on, finishing with spikes incident to $v_{r-1}$, and finally edges of $B_0$.  Since a necklace has no $K_4$ beads, all bags still have at most $4$ elements.

In both cases we have a path decomposition where all bags have at most $4$ elements, so the pathwidth of of the graph is at most $3$.
\end{proof}

In the proof of the previous lemma, bags of size $4$ are required only to deal with $K_4$ beads at the end of a strand, or to allow us to close up a necklace.  Thus, spiked strands with no $K_4$ beads have pathwidth at most $2$, and for every graph in $\B$ there is a set of at most two vertices (one secondary vertex in each $K_4$ bead for a spiked strand, or any primary vertex for a spiked necklace) whose deletion leaves a graph of pathwidth at most $2$.

The graphs in $\B$ have a smaller upper bound on pathwidth than the connected graphs obtained from graphs on at most $6$ vertices, which may have an arbitrarily large number of vertices, but have small radius and diameter.  We wonder if this is a general phenomenon.

\begin{question}
Suppose $T$ is a $k$-vertex tree that is not a path.  Do connected $T$-minor-free graphs with sufficiently high radius (or perhaps diameter) have a significantly stronger upper bound on pathwidth than $k-2$?  Can we obtain an even stronger bound by allowing deletion of a small number of vertices?
\end{question}

An answer to this question might help to resolve the following weakening of the \erdos.-\sos. Conjecture.

\begin{conjecture}\label{conj:esminor}
If $T$ is a $k$-vertex tree and $n \ge k \ge 2$, then the number of edges in an $n$-vertex $T$-minor-free graph is at most $(k-2)n/2$.
\end{conjecture}

There is also an obvious conjecture intermediate between the \erdos.-\sos. Conjecture and \cref{conj:esminor}.

\begin{conjecture}\label{conj:estopminor}
If $T$ is a $k$-vertex tree and $n \ge k \ge 2$, then the number of edges in an $n$-vertex graph that does not have $T$ as a topological minor (i.e., does not contain a subdivision of $T$ as a subgraph) is at most $(k-2)n/2$.
\end{conjecture}

\cref{conj:esminor,conj:estopminor} have the same sharpness examples as the \erdos.-\sos. Conjecture, namely graphs with all components isomorphic to $K_{k-1}$.  As far as we are aware, the best upper bound on the number of edges of an $n$-vertex graph without an arbitrary $k$-vertex tree $T$ as a subgraph (with $n \ge k \ge 2$) is $(k-2)n$, twice the conjectured bound.  If a graph has more edges than this, then it has average degree greater than $2(k-2)$, which means it has a subgraph with minimum degree greater than $k-2$ (repeatedly delete vertices of degree at most $k-2$), and hence at least $k-1$.  A copy of $T$ can then be found using a greedy argument.  However, \cref{thm:pwnm2} gives a slightly better upper bound for \cref{conj:esminor}.  It is well known that an $n$-vertex graph with treewidth (or pathwidth) at most $w$ and $n \ge w$ has at most $wn-\binom{w+1}{2}$ edges (the number of edges in a $w$-tree with $n$ vertices), which with $w=k-2$ gives a bound of $(k-2)n - \binom{k-1}{2}$ edges for $n \ge k$.

We note that Havet, Reed, Stein, and Wood \cite{HRSW20} have proposed a variant of the \erdos.-\sos. Conjecture, which also has natural weakenings for minors and topological minors.

In \cite{dhjmmw-excludedtree}, Dujmovi\'{c} et al.~give a result related to \cref{thm:pwnm2}, showing that every graph forbidding a tree $T$ as a minor can be represented as a subgraph of a relatively small structure.  Fix a tree $T$ with radius $r$, order $k$, and maximum degree $\Delta$.  Then there exist
constants $w$ and $c$ (depending on $T$) such that for every $T$-minor-free graph $G$ there is a graph $H$ of pathwidth at most $w$ such that $G$ is isomorphic to a subgraph of the strong product $H \boxtimes K_c$ (which is also the lexicographic product $H[K_c]$).  They show that $w = 2r-1$ and $c = (\Delta+r-2)(k-1)$ suffice in general, but for paths smaller values of $c$ and $w$ work, although we must have $w \ge r-1$.

\begin{question}
Can our characterizations be used to find small values of $c$ and $w$ that work for $T$-minor-free graphs where $T$ is $Y$ or one of its subtrees?  If so, does this provide insight into values of $c$ and $w$ that will work in more general situations?
\end{question}

Our characterization of graphs without $Y$ as a subgraph or minor is sufficiently simple that it would not be too difficult to find explicit generating functions for both labeled and unlabeled graphs in this class.  Here we just consider broad estimates of how quickly the numbers of these objects increase.
Using the notation of \cite[p.~243]{FS09}, we write $a_n \bowtie \alpha^n$ for a sequence $(a_n)$ if $\limsup_{n \to \infty} |a_n|^{1/n} = \alpha$.

We first consider growth constants for labeled graphs.  Bernardi, Noy, and Welsh \cite{BeNoWe10} say that a family of graphs with $g_n$ labeled elements of order $n$ has \emph{growth constant} $\gamma$ if $g_n/n! \bowtie \gamma^n$, i.e., $\limsup_{n \to \infty} (g_n/n!)^{1/n} = \gamma$.
The growth constant is known to be finite for proper minor-closed classes of graphs.  Bernardi, Noy, and Welsh observe that the growth constant of a minor-closed class of graphs $\G$ is the same as for the subset of $\G$ consisting of its connected elements.

If $T$ is a proper subtree of $Y$ then the growth constant of $T$-minor-free graphs is easily determined.  Bernardi, Noy, and Welsh point out that the growth constant of $P_k$-minor-free graphs is $0$ for any fixed $k$, so we only need to consider $T = K_{1,3}$, $Y_1$, or $Y_2$.  The $K_{1,3}$-minor-free graphs include 
paths, of which there are $n!/2$ on $n$ vertices, so the growth constant for $K_{1,3}$-minor-free graphs is at least $1$.  By \cref{thm:forbidY2} the connected $Y_2$-minor-free graphs with $n$ vertices are obtained from a graph of order at most $5$ by cloning leaves ($O(5^n)$ graphs), paths ($n!/2$ graphs), cycles ($(n-1)!/2$ graphs), stars ($n$ graphs), or spiked $K_{1,1,t}$ or spiked $K_{2,t}$ or $\TS$ graphs ($O(n^2 n!)$ graphs).  Thus, the total number of connected $n$-vertex $Y_2$-minor-free graphs is $O(n^2 n!)$ and the growth constant is at most $1$.  Since $K_{1,3} \subseteq Y_1 \subseteq Y_2$, the growth constant for all of $K_{1,3}$-, $Y_1$-, and $Y_2$-minor-free graphs is $1$.

For $Y$-minor-free graphs the results of Bernardi, Noy, and Welsh give a lower bound on the growth constant.  They consider \emph{thick caterpillars}, which are caterpillars where for each edge of the spine we can optionally add one vertex adjacent to both of its ends (forming a triangle), and show that their growth constant $\delta$ is approximately $2.251\,59$.  Thick caterpillars are spiked strands, so the growth constant of $Y$-minor-free graphs is at least $\delta$.  We determine the exact growth constant.  

\begin{theorem}\label{thm:labyfree}
Let $a_n$ be the number of $n$-vertex labeled $Y$-minor-free graphs.  Then $a_n/n! \bowtie \alpha^n$ where $\alpha$ is the reciprocal of the positive solution of $1-xe^x(2e^x-1-x+x^2/2) = 0$, so that $\alpha \approx 2.490\,805$.
\end{theorem}

\begin{proof}
As noted above, it suffices to consider only connected graphs.  The class of connected $Y$-free graphs is the union of three subclasses: $\A$ containing graphs covered by \crefWithTheorem{smallG}, $\Bs$ containing spiked strands, and $\Bn$ containing spiked necklaces.  There is some overlap between these classes, but this will not affect our analysis.  The number of labelings of elements of $\A$ is $O(n^6\, 6^n)$, which has growth constant $0$.  The growth constant of $\Bn$ is at most the growth constant of $\Bs$, because every $n$-vertex spiked necklace can be obtained from some $(n+1)$-vertex spiked strand by identifying two vertices.  So we need only consider the growth constant of $\Bs$.

It is not difficult to show that the exponential generating function for labelings of elements of $\Bs$ has the form $a(x) = a_1(x) + a_2(x)/(1-a_3(x))$ where $a_1(x), a_2(x), a_3(a)$ are entire functions (analytic at all complex values of $x$).  Here $a_1(x)$ deals with strands with a small number of beads, $a_2(x)$ deals with the ends of a strand, and $a_3(x)$ deals with adding one new primary vertex, spikes incident with that vertex, and secondary vertices of a new bead.  The growth constant $\alpha$ is the reciprocal of the radius of convergence of $a(x)$, and this radius is the smallest positive $x$ with $1-a_3(x)=0$ \cite[\S IV.3.2]{FS09}.  Thus, the exact expressions for $a_1(x)$ and $a_2(x)$ do not affect the growth constant, so we do not consider them further.  Now $a_3(x)$ is the product of three terms: $x$ for adding a new primary vertex, $e^x$ for adding spikes to that vertex, and a term representing secondary vertices of a bead.  This third term is the sum of $e^x$ for $K_{1,1,t}$ with $t \ge 0$, $e^x-1-x$ for $K_{2,t}$ with $t \ge 2$, and $x^2/2$ for $K_{2,1,1}$.  Thus, $a_3(x) = xe^x(2e^x-1-x+x^2/2)$.  The unique positive solution of $1-a_3(x)=0$ is $1/\alpha \approx 0.401\,476\,6$, so that $\alpha \approx 2.490\,805$.
\end{proof}

We can perform a similar analysis with unlabeled $Y$-minor-free graphs.  

\begin{theorem}
Let $b_n$ be the number of $n$-vertex unlabeled $Y$-minor-free graphs.  Then $b_n \bowtie \beta^n$ where $\beta$ is the larger positive solution of $x^4-3x^3+x^2-2x+1 = 0$, so that  $\beta \approx 2.852\,145$.
\end{theorem}

\begin{proof}
The proof is very similar to that of \cref{thm:labyfree}, except that we compute ordinary generating functions instead of exponential generating functions, so we omit most details.  
By \cite[proof of Theorem IV.8]{FS09}, it suffices to consider only connected graphs, because the general graphs are constructed from the connected ones by the \textsc{MSet} (multiset) operator of \cite{FS09}, which does not change the radius of convergence of the ordinary generating function.
The generating function that corresponds to $a_3(x)$ is $x\, (\frac 1{1-x}) ( \frac 1{1-x} + \frac{x^2}{1-x} + x^2)$.
\end{proof}

We obtain a larger constant for unlabeled graphs because $Y$-minor-free graphs in general have many automorphisms, and thus $a_n \ll n!\, b_n$.

\begin{question}
Can we find upper and lower bounds that are not too far apart for the growth constant for graphs without a fixed tree $T$ (or fixed forest $F$) as a minor? 
\end{question}

The following more general question may be relevant.

\begin{question}
Can we find upper and lower bounds that are not too far apart for the growth constant for graphs with treewidth or pathwidth at most $k$?
\end{question}

\bibliographystyle{hplain}
\bibliography{bibliography}{}
\vspace{-5mm}
\end{document}